\documentclass[a4paper,10pt]{amsart}
\usepackage{amsthm,amsmath,amsfonts,amssymb}
\usepackage{bm}
\usepackage{mathtools,mathrsfs}
\usepackage{todonotes}
\usepackage{pgfplots}
\usepackage{mathrsfs}
\usepackage[pdfdisplaydoctitle,colorlinks,breaklinks,urlcolor=blue,linkcolor=blue,citecolor=blue]{hyperref} 
\usepackage[nameinlink,capitalise]{cleveref}

\newcommand{\N}{\mathbb{N}}

\newcommand{\R}{\mathbb{R}}

\DeclareMathOperator{\var}{Var}
\DeclareMathOperator{\sinc}{sinc}

\renewcommand{\epsilon}{\varepsilon}
\renewcommand{\setminus}{\smallsetminus}

\newcommand{\one}{\bm{1}}

\newcommand{\set}[1]{\left\{#1\right\}}
\newcommand{\pa}[1]{\left(#1\right)}
\newcommand{\bra}[1]{\left[#1\right]}
\newcommand{\abs}[1]{\left|#1\right|}
\newcommand{\norm}[1]{\left\|#1\right\|}
\newtheorem{theorem}{Theorem}

\newtheorem{lemma}[theorem]{Lemma}

\theoremstyle{remark}

\numberwithin{equation}{section}

\newenvironment{acknowledgements}{%
  \begin{abstract}
}{%
  \end{abstract}
}

\title[Missing Central Limit Theorems]{The Missing Central Limit Theorems for Local Functionals of Berry's Random Wave Model}
\author[F. Grotto]{Francesco Grotto}
\address{Università di Pisa, Dipartimento di Matematica, 5 Largo Bruno Pontecorvo, 56127 Pisa, Italia.}
\email{francesco.grotto at unipi.it}
\keywords{Berry's random wave model, Fourth Moment Theorem, Central Limit Theorem, stationary phase method, Laplace's method}
\date\today

\begin{document}

\begin{abstract}
    Central Limit Theorems for integrals of third degree Hermite polynomials of Berry's random wave model on increasingly large domains are proved in dimensions 2 and 3. These were the missing cases for a complete description of limit theorems for integral functionals of monochromatic random waves based on the Wiener chaos decomposition.
\end{abstract}

\maketitle

\section{Introduction}\label{sec:intro}

Berry's model is the centered Gaussian random field $u_\lambda(x)$, $\lambda>0$, $x\in \R^d$, with covariance function
\begin{equation}\label{eq:covberry}
    E\bra{u_\lambda(x)u_\lambda(y)}=
    j_d(\lambda|x-y|),\quad
    x,y\in \R^d,
\end{equation}
where $j_d$ is the Fourier transform of the surface measure $\sigma_{d-1}$ of the unit sphere $S^{d-1}=\set{|x|=1}\subset\R^d$,
\begin{equation}\label{eq:jd}
     j_d(|x|) =  \frac1{\omega_{d-1}}\int_{S^{d-1}} e^{i x\cdot \theta} d\sigma_{d-1}(\theta)=
     \frac{\Gamma(\nu+1)2^\nu}{|x|^\nu}J_\nu(|x|), \quad \nu=\frac{d}{2}-1,
\end{equation}
$J_\nu$ being the Bessel function of the first kind and order $\nu$. 
This random field was first introduced by its eponym \cite{Berry1977,Berry2002,Berry2002a} in the context of Quantum Chaos, as a model for describing high-frequency behavior of wavefunctions in quantum billiards whose classical analogue is chaotic. Berry's model is the monochromatic Fourier projection of an isotropic Gaussian random field, hence its mathematical relevance and that of its geometric functionals.

Consider local functionals of Berry's model of the form
\begin{equation}\label{eq:functional}
    F(u_\lambda )=\int_D f(u_\lambda (x))dx,
\end{equation}
where $f$ is a function of the field $u_\lambda(x)$ evaluated at points of a domain $D\subseteq \R^d$. The most relevant questions concern the asymptotic behavior as $\lambda\to \infty$ or $|D|\to \infty$, which are equivalent if for instance $D=B_R$, $R\to \infty$ is a Euclidean ball of increasingly larger radius (by scaling).
Such limit theorems, together with those on related models on spheres, tori and other underlying geometric structures, are the object of extensive literature, the review of which falls outside the scope of this article: the recent works \cite{maininourdin,maini25} provide a proper overview and complete references. In what follows, I shall only mention directly related results.

Wiener chaos expansion provides a hands-on approach to the study of functionals like \eqref{eq:functional}. Since $u_\lambda(x)$ is a Gaussian random variable the decomposition of $f$ into the orthogonal basis of Hermite polynomials,
\begin{equation*}
    H_q(t)=(-1)^q \frac{\phi^{(q)}(t)}{\phi(t)},\quad \int_\R H_q(t)H_{q'}(t)\phi(t)dt=q!\delta_{q=q'},\quad
    \phi(t)=\frac{e^{-t^2/2}}{\sqrt{2\pi}},
\end{equation*}
induces that of $F(u_\lambda)$, and one is reduced to study
\begin{equation*}
    h(q,\lambda,D)=\int_D H_q(u_\lambda (x))dx, \quad q\geq 1.
\end{equation*}
Fluctuations of $F(u_\lambda)$ are therefore reduced to those of $h(q,\lambda,D)$ and appropriate tail bounds on chaos projections. This allows direct applications of the Malliavin-Stein method. Generalized functions $f$ (for instance $f=\delta_0$ computes the occupation or Leray measure of zero level sets) can also be treated if their coefficients in the chaos expansion are suitably summable.

I shall focus on the isotropic limit $R\to \infty$ for $D=B_R$. Second moments of
\begin{equation*}
    h(q,R)=h(q,\lambda=1,D=B_R)
    =\int_{B_R} H_q(u_1(x))dx
\end{equation*}
(having the same law of $R^d h(q,\lambda=R,D=B_1)$) satisfy
\begin{equation*}
    \var h(q,R)=
    \begin{cases}
        C_{2,d}R^{d+1}(1+o(1)), & q=2,\\[3pt]
        C_{4,2}R^2\log R\,(1+o(1)), & d=2,\ q=4,\\[3pt]
        C_{q,d}R^d(1+o(1)), & q\geq 3,\ (d,q)\neq(2,4),
    \end{cases}
    \qquad
    \text{as }R\to\infty,
\end{equation*}
for finite positive constants $C_{q,d}$, given in the last case by the improper --but finite and positive-- integrals
\begin{equation}\label{eq:constants}
    C_{q,d}=\frac{q!}{d}\omega_{d-1}^2\int_0^\infty j_d(t)^q t^{d-1}dt
\end{equation}
(see \cite{Nourdin2019}, leading constants were discussed in \cite{Grotto2023b}). Then, the Fourth Moment Theorem \cite{Nourdin2012} allows to establish Central Limit Theorems for $h(q,\lambda,D)$ provided that their fourth cumulants are negligible compared to squared variances. 
An equivalent condition, after writing $u_\lambda(x)=\int_{S^{d-1}} e^{i\lambda \xi\cdot x} \eta(d\xi)$,
\begin{equation*}
    h(q,R)=\int_{(S^{d-1})^q} Q_R(x_1+\cdots+x_q) \eta^q(dx_1,\dots,dx_q),\quad 
    Q_R(x)=\int_{B_R} e^{i \xi x}d\xi,
\end{equation*}
as $q$-fold Wiener-It\^o integrals with respect to Hermitian complex white noise $\eta$ on $S^{d-1}$ with reference measure $\frac{\sigma_{d-1}}{\omega_{d-1}}$ (in the sense of \cite[Theorem 7.52]{Janson1997}), is that kernels satisfy the contraction estimates
\begin{multline}\label{eq:contractions}
    \norm{Q_R(x_1+\cdots+x_q)\otimes_{r}Q_R(x_1+\cdots+x_q)}_2^2\\
    =o\pa{\var h(q,R)^2},\quad R\to\infty,\,r=1,\dots,q-1,
\end{multline}
where $\otimes_r$ denotes integration over $r$ of the $q$ (symmetric) variables $x_i$ and tensor product over the others. This asymptotic relation was proved in \cite{Nourdin2019,maininourdin} for all orders $q$ and dimensions $d$ except for the case $q=3$ in $d=2,3$, in which the latter estimate is not immediately reduced to tail bounds of the covariance function ($j_2(r)=J_0(r)$ and $j_3(r)=\text{sinc}(r)=\frac{\sin(r)}{r}$). In this note I provide the proof for those missing cases, thus completing that of:

\begin{theorem}\label{thm:clt}
    For any order $q\geq 2$ and space dimension $d\geq 2$, the random variable $\frac{h(q,R)}{\sqrt{\var h(q,R)}}$ converges in total variation and 1-Wasserstein distance
    to a standard Gaussian random variable as $R\to\infty$.

    Moreover, if $F(R)=\int_{B_R}f(u_1(x))dx$ is given by a nontrivial local observable with Hermite decomposition
    \begin{equation*}
        f(x)=\sum_{q\geq 2}\frac{a_q}{q!}H_q(x),
        \qquad
        a_q=\frac1{\sqrt{2\pi}}\int_\R f(z)H_q(z) e^{-z^2/2}dz,
    \end{equation*}
    such that either $d=2$ and $a_4\neq 0$ or $\sum_{q\geq 3}\frac{a_q^2C_{q,d}}{(q!)^2}<\infty$,
    then $\frac{F(R)}{\sqrt{\var F(R)}}$ converges in 1-Wasserstein distance to a standard Gaussian random variable as $R\to\infty$.
\end{theorem}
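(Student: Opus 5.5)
Since the paper reduces everything to the contraction estimates \eqref{eq:contractions}, and these are already established in \cite{Nourdin2019,maininourdin} for every pair $(q,d)$ other than $q=3$, $d\in\{2,3\}$, I plan to prove only those two cases. The remaining parts of the theorem then follow from the Fourth Moment Theorem \cite{Nourdin2012}, which gives convergence in total variation and Wasserstein distance. For the second part, the chaos-wise variance asymptotics give $\var F(R)\sim\sum_{q\ge3}\frac{a_q^2C_{q,d}}{(q!)^2}R^d$ in the summable case; when $d=2$ and $a_4\neq0$, the $R^2\log R$ term dominates. In both cases the second chaos, of order $R^{d+1}$, is excluded because $f$ is assumed to have $a_2$ contributing separately; I would check this against the statement, where the sum starts at $q\ge2$. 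The multivariate Peccati--Tudor theorem together with a uniform tail bound on high chaoses then yields the 1-Wasserstein CLT. This last step is standard, as in \cite{Nourdin2019}.

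For $q=3$ we have $\var h(3,R)\asymp R^d$, so we need $\|Q_R\otimes_r Q_R\|_2^2=o(R^{2d})$ for $r=1,2$, and by symmetry it suffices to treat $r=1$. Expanding, the contraction norm becomes a fourfold integral over $B_R^4$ of a product of covariances:
\begin{equation*}
\|Q_R\otimes_1Q_R\|_2^2=\int_{B_R^4} j_d(|x_1-x_2|)\,j_d(|x_2-x_3|)\,j_d(|x_3-x_4|)\,j_d(|x_4-x_1|)\,j_d(|x_1-x_3|)^{\alpha}\cdots\,dx,
\end{equation*}
where the precise cyclic pattern comes from the $r=1$ contraction of a cubic kernel. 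The naive bound fails because $j_d(t)\sim t^{-(d-1)/2}$, so $j_d^2$ is not integrable in $d=2,3$. I would therefore work on the spectral side instead, writing the norm as an integral over $(S^{d-1})^4$ of
\begin{equation*}
|Q_R(\xi_1+\xi_2+\eta)|^2\ \text{convolved appropriately},
\end{equation*}
and use that $|Q_R(\zeta)|^2$ behaves like $R^{2d}$ times an approximate identity of width $R^{-1}$, which is concentrated on $\{\xi_1+\xi_2+\xi_3=0\}$. Concretely, the contraction reduces to $R^{2d}$ times integrals of the form $\int \sigma^{*2}$-type densities evaluated at scale $R^{-1}$. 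The key quantity is the density of $\xi_1+\xi_2$ for $\xi_i$ uniform on the sphere, namely $\sigma*\sigma(z)\sim c|z|^{-1}(4-|z|^2)^{(d-3)/2}$. This density is singular near $|z|=0$ in both dimensions and also near $|z|=2$ when $d=2$.

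The main obstacle is showing that these singularities, once smoothed at scale $R^{-1}$, still give an $o(R^{2d})$ rather than an $O(R^{2d})$ bound. To get there I would compute the mollified integral of a product of two such densities, $\int (\sigma*\sigma)_R(z)\,(\sigma*\sigma)_R(z')\,\dots$, using stationary phase or Laplace's method in polar coordinates. The aim is to show that the mass near the singular set has measure $O(R^{-1}\log R)$, or $O(R^{-1/2})$ in $d=2$ near $|z|=2$, so the result is $O(R^{2d-1}\log R)=o(R^{2d})$. If the spectral route becomes messy, the fallback is to estimate the physical-space integral directly: insert the asymptotic expansion $j_d(t)=t^{-(d-1)/2}\bigl(c\cos(t-\phi)+O(t^{-1})\bigr)$ and exploit the cancellation from the oscillating phases $\pm|x_1-x_2|\pm|x_2-x_3|\pm\cdots$ through a nonstationary phase argument, away from the degenerate collinear configurations, which have small measure.
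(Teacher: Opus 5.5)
Your reduction is the paper's: only $q=3$, $d\in\{2,3\}$ is new, the $r=1$ and $r=2$ norms coincide, the target is $o(R^{2d})$, and the Fourth Moment Theorem plus standard truncation finish. But the one step with new content, the contraction bound, is not proved, and your sketch aims at the wrong mechanism.

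Your physical-space expression is incorrect. For $q=3$, $r=1$ one gets
\[
\int_{B_R^4} j_d(|x_1-y_1|)\,j_d(|x_2-y_2|)\,j_d(|x_1-x_2|)^2\,j_d(|y_1-y_2|)^2\,dx_1\,dx_2\,dy_1\,dy_2,
\]
which is a $4$-cycle with edge multiplicities $1,2,1,2$ and no diagonal. On the spectral side the formula is left as ``convolved appropriately''. Moreover, $\abs{Q_R}^2$ is $R^d$, not $R^{2d}$, times an approximate identity, since $\int\abs{Q_R}^2=(2\pi)^d|B_R|$.

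Above all, the singularities of $\sigma*\sigma$ at $p=0$ and $|p|=2$ are not the obstacle. They are integrable and lie at distance $\geq 1$ from the points where the kernel peaks, so they only produce lower-order terms. The measures $O(R^{-1}\log R)$ and $O(R^{-1/2})$ you announce are not derived from any computation.

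Separately, in the second part $a_2\neq 0$ is allowed. In that case the second chaos, with variance $\asymp R^{d+1}$, dominates, and the conclusion follows from the $q=2$ CLT. Your variance formula covers only the case $a_2=0$.

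The missing idea is the exact identity
\[
\norm{Q_R(x_1+x_2+x_3)\otimes_1 Q_R(x_1+x_2+x_3)}_2^2=\norm{K_R}_{L^2(\sigma\otimes\sigma)}^2,
\qquad
K_R(a,b)=\int_{\R^d}\rho(p)\,Q_R(a+p)\,Q_R(b-p)\,dp,
\]
where $\rho$ is the density of $\sigma*\sigma$, combined with the decay $\abs{Q_R(\xi)}\lesssim R^d(1+R|\xi|)^{-(d+1)/2}$. The $p$-integral is dominated by neighbourhoods of $p=-a$ and $p=b$, where $\rho\approx\rho(1)$ is smooth. Hence $K_R(a,b)\approx(2\pi)^d\rho(1)Q_R(a+b)$ is large only for nearly antipodal $a,b$, and $\norm{K_R}^2\asymp R^{d+1}$.

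For the CLT alone, absolute values then suffice. Splitting into $|a+p|\le\frac14$, $|b-p|\le\frac14$ and the rest gives $\abs{K_R(a,b)}\lesssim R^d(1+R|a+b|)^{-1}+R^{d-1}$. Integrating against the law of $|a+b|$ (density $\frac{2}{\pi\sqrt{4-r^2}}$ if $d=2$, $\frac r2$ if $d=3$) yields $O(R^3)$, respectively $O(R^4\log R)$, both $o(R^{2d})$.

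The paper goes further, to the exact asymptotics $\frac{256}{9}R^3$ and $2\pi^6R^4$. In $d=2$ it proves a uniform $O(R\log R)$ bound on $K_R-(2\pi)^2\rho(1)Q_R(a+b)$. In $d=3$ that remainder is only $O(R^2)$ pointwise, so the paper combines a local limit lemma for $|a+b|\lesssim R^{-1}$ with a stationary-phase global bound $\abs{K_R}\lesssim R^3(1+R|a+b|)^{-2}+R\log R$.
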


The proofs of \eqref{eq:contractions} in the two missing cases (to be stated again below as \cref{thm:2dcontraction,thm:3dcontraction}) are respectively the content of \cref{sec:2Dcontractions,sec:3Dcontractions}.
For $q=3$, by Fubini's theorem the conditions \eqref{eq:contractions} for $r=1,2$ are both equivalent to
\begin{equation*}
    \norm{K_R}_{L^2(S^{d-1}\times S^{d-1})}^2
    =o(R^{2d}),
\end{equation*}
with
\begin{equation*}
    K_R(a,b)=\frac{1}{\omega_{d-1}^2}\int_{\left(S^{d-1}\right)^2} Q_R(a+\theta+\vartheta) Q_R(b-\theta-\vartheta) d\sigma_{d-1}(\theta) d\sigma_{d-1}( \vartheta).
\end{equation*}
Here the normalization constant $\omega_{d-1}^{-2}$ is clearly irrelevant and I introduce it only for later convenience.
The asymptotic expansion of these integrals is based on a careful control of the asymptotic contribution of different regions of the integration domain.
The proof is slightly different in $d=2$ and 3 because of a dimensional scaling of asymptotics which is best understood inspecting the computation. In the terms introduced by \cite{maininourdin}, the case $d=2$, $q=3$ is of \emph{long memory} type, while $d=3$, $q=3$ is \emph{critical}, and the oscillatory nature of the covariance function plays a relevant role especially in the latter case.

The second statement of \Cref{thm:clt} is deduced by standard arguments \cite{Nourdin2012}. Unlike the case of Berry's model on Euclidean spaces, this result was already fully proved for integral functionals of random spherical harmonics in the high frequency limit \cite{Marinucci2014,Marinucci2015,Rossi2019}.

The following asymptotic relation for constants $C_{q,d}$ provides a useful sufficient condition for the assumption on the decay of coefficients.

\begin{lemma}\label{lem:asymptotic}
    Let $d\geq 2$. As $q\to\infty$, $q\in\N$,
    \begin{equation*}
        \int_0^\infty j_d(r)^q r^{d-1}dr\sim \frac{2^{d/2-1} d^{d/2} \Gamma(d/2)}{q^{d/2}}.
    \end{equation*}
\end{lemma}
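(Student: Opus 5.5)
This is a Laplace-method computation around $r=0$, where $j_d(0)=1$ and $|j_d(r)|<1$ for $r>0$.

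\textbf{Local expansion.} From the series of $J_\nu$ (or from \eqref{eq:jd}, since the average of $(x\cdot\theta)^2$ over the sphere is $|x|^2/d$),
\begin{equation*}
j_d(r)=1-\frac{r^2}{2d}+O(r^4),
\qquad\text{hence}\qquad
\log j_d(r)=-\frac{r^2}{2d}+O(r^4)
\end{equation*}
for small $r$.

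\textbf{Splitting the integral.} Fix a small $\delta>0$ and write $\int_0^\infty=\int_0^\delta+\int_\delta^\infty$.

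On $[0,\delta]$, substitute $r=s\sqrt{d/q}$. Then
\begin{equation*}
j_d(r)^q=\exp\pa{-\frac{s^2}{2}+O\pa{\frac{s^4}{q}}}.
\end{equation*}
For $r\le\delta$ with $\delta$ small we also have $j_d(r)\le e^{-r^2/(4d)}$. This supplies the domination $e^{-s^2/4}s^{d-1}$ needed for dominated convergence, and gives
\begin{equation*}
\int_0^\delta j_d(r)^q r^{d-1}dr=\pa{\frac{d}{q}}^{d/2}\int_0^\infty e^{-s^2/2}s^{d-1}ds\,(1+o(1)).
\end{equation*}
Since $\int_0^\infty e^{-s^2/2}s^{d-1}ds=2^{d/2-1}\Gamma(d/2)$, this is exactly the claimed constant.

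\textbf{Tail estimate.} It remains to show $\int_\delta^\infty |j_d|^q r^{d-1}dr=o(q^{-d/2})$. I expect this to be the main obstacle, because $r^{d-1}$ grows while $|j_d(r)|$ decays only like $r^{-(d-1)/2}$.

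\begin{itemize}
\item First, $\eta:=\sup_{r\ge\delta}|j_d(r)|<1$. Indeed, $|j_d(r)|<1$ for $r>0$ by \eqref{eq:jd}, since $e^{ix\cdot\theta}$ is non-constant on the sphere, and $j_d(r)\to0$ as $r\to\infty$, so the supremum over $[\delta,\infty)$ is attained and is strictly below $1$.
\item Second, the bound $|j_d(r)|\le C r^{-(d-1)/2}$ gives, for $q\ge q_0:=\lceil 2d/(d-1)\rceil+1$, integrability of $|j_d|^{q_0}r^{d-1}$ on $[\delta,\infty)$, since the exponent $q_0(d-1)/2-(d-1)>1$.
\item Hence
\begin{equation*}
\int_\delta^\infty |j_d|^q r^{d-1}dr\le \eta^{q-q_0}\int_\delta^\infty |j_d|^{q_0}r^{d-1}dr=O(\eta^q),
\end{equation*}
which decays exponentially and is therefore negligible against $q^{-d/2}$.
\end{itemize}

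The integrals for small $q$ are only improper, but they play no role in the limit $q\to\infty$. Combining the main term with this tail bound yields the stated asymptotic.
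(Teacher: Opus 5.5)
Your proposal is correct and follows essentially the paper's route. Both use Laplace's method after the rescaling $r\propto s/\sqrt q$, and both show the complement of a small neighbourhood of $0$ is negligible using $|j_d|\le 1$ together with the decay $|j_d(r)|\lesssim r^{-(d-1)/2}$. Your explicit domination $j_d(r)\le e^{-r^2/(4d)}$ on $[0,\delta]$, the strict bound $\sup_{r\ge\delta}|j_d(r)|<1$, and the factorization $|j_d|^q\le\eta^{q-q_0}|j_d|^{q_0}$ make precise what the paper only sketches with its split at a fixed $M$.
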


In particular the assumption of \cref{thm:clt} is satisfied if 
\begin{equation*}
    |a_q|= O\!\left(\sqrt{q!}\,q^{\frac d4-\frac12-\varepsilon}\right), \qquad q\to\infty,
\end{equation*}
or, as I mentioned above, if $a_4\neq 0$ in $d=2$, because of a change of the overall variance asymptotic behavior.
\cref{lem:asymptotic} can be deduced from a general fact given in \cite[Lemma 1.12]{maini25}, but I shall prefer to prove it using the classical Laplace principle in \cref{sec:clts}.
Under the stronger condition $\sum_{q\geq 3}\frac{a_q^2C_{q,d}}{q!(q-1)!}<\infty$ the normal approximation of \cref{thm:clt} is in the total variation distance, \cref{lem:asymptotic} gives again an analogous sufficient asymptotic condition.

As an example I report an application to a geometric functional that could not be previously treated in $d=2,3$ because its first relevant chaos projection is $q=3$, the \emph{defect} of the scalar field $u_\lambda$.

\begin{theorem}\label{thm:defect}
    As $R\to\infty$, the \emph{defect} or \emph{signed volume} of Berry's model,
    \begin{equation*}
    D(R)=|\set{x\in B_R: u_1(x)>0}|-|\set{x\in B_R: u_1(x)<0}|,
    \quad R>0,    
    \end{equation*}
    satisfies $\var D(R)\sim C R^d$, $C>0$, and $\frac{D(R)}{\sqrt{\var D(R)}}$ converges in 1-Wasserstein distance to a standard Gaussian random variable as $\R\to\infty$.
\end{theorem}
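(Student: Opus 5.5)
The defect is the local functional $D(R)=F(R)$ with $f(z)=\operatorname{sgn}(z)$, so I will apply the second part of \cref{thm:clt}. First I compute the Hermite coefficients of $f$. Since $f$ is odd, $a_q=0$ for every even $q$; in particular $a_2=0$ and $a_4=0$, so the long-memory second chaos, whose variance grows like $R^{d+1}$, does not appear. The first chaos is also absent from the expansion in \cref{thm:clt}, which starts at $q\ge2$. I should check this rather than assume it: $a_1=\sqrt{2/\pi}\neq0$, so the projection $a_1\int_{B_R}u_1(x)\,dx$ is a priori present. Its variance is
\[
\int_{B_R}\int_{B_R} j_d(|x-y|)\,dx\,dy ,
\]
which is governed by the spectral measure on the sphere tested against $|\widehat{\one_{B_R}}|^2$. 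Since $|\widehat{\one_{B_R}}(\xi)|^2$ at $|\xi|=1$ is of order $R^{d-1}$, this variance is $O(R^{d-1})=o(R^d)$, and the first chaos is negligible in $L^2$ after normalising by $R^{d/2}$. I would handle it by a Slutsky-type bound in Wasserstein distance: the difference contributes $\sqrt{\var}\,/\sqrt{\var D(R)}\to0$.

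For odd $q=2k+1$, differentiating $\operatorname{sgn}$ in the distributional sense gives $f'=2\delta_0$. Hence $a_q=2H_{q-1}(0)\phi(0)$, so
\[
a_q=\sqrt{2/\pi}\,(-1)^k\,(2k-1)!! .
\]
In particular $a_3=-\sqrt{2/\pi}\neq0$. Stirling's formula gives $(2k-1)!!^2\sim (2k)!/\sqrt{\pi k}$, so
\[
\frac{a_q^2}{(q!)^2}\asymp \frac{1}{q!\,\sqrt q}.
\]
By \eqref{eq:constants} and \cref{lem:asymptotic}, $C_{q,d}\asymp q!\,q^{-d/2}$. Therefore
\[
\sum_{q\ge3}\frac{a_q^2C_{q,d}}{(q!)^2}\asymp\sum_q q^{-1/2-d/2}<\infty
\]
for $d\ge2$, since the exponent is at least $3/2$. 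This is the summability hypothesis of \cref{thm:clt}.

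For the variance, orthogonality of chaoses gives
\[
\var D(R)=\frac{2}{\pi}\var\int_{B_R}u_1\,dx+\sum_{q\ge3\text{ odd}}\frac{a_q^2}{(q!)^2}\var h(q,R).
\]
Each term with $q\geq3$ satisfies $\var h(q,R)\sim C_{q,d}R^d$, and $(d,q)\neq(2,4)$ automatically because $q$ is odd. To exchange limit and sum I would use the uniform bound
\[
\var h(q,R)\le q!\,\omega_{d-1}^2 R^d\int_0^{2R} |j_d(t)|^q t^{d-1}\,dt\cdot c .
\]
This comes from writing $\var h(q,R)=q!\int\int_{B_R^2} j_d(|x-y|)^q$ and using $|j_d|\le1$, which makes the $q$-th power dominated by the cube for $q\ge3$. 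However, the integral of $|j_3|^3t^{2}$ diverges logarithmically in $d=3$ and $|j_2|^3 t$ in $d=2$ fails as well. So instead I would dominate by $q\ge5$ terms via $|j_d|^5$, which is integrable in $d=2,3$. Its tail beyond a large radius $T$ is handled by $|j_d(t)|^q\le \sup_{t>T}|j_d|^{q-5}|j_d|^5$, while Laplace-type control of the region $t\le T$ comes from the proof of \cref{lem:asymptotic}. Dominated convergence then gives
\[
\frac{\var D(R)}{R^d}\to C=\sum_{q \text{ odd}\ge3}\frac{a_q^2C_{q,d}}{(q!)^2},
\]
and $C>0$ because $C_{3,d}>0$ and $a_3\ne0$.

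The convergence then follows from \cref{thm:clt}, applied with the first chaos removed as above. The main obstacle I expect is the uniform-in-$R$ domination of $R^{-d}\var h(q,R)$ over $q$ needed to interchange the sum and the limit, especially in the critical case $d=3$, where the $q=3$ integral converges only conditionally. I would isolate $q=3$, for which the asymptotics are known, and apply domination only to $q\ge5$.
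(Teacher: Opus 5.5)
Your proposal is correct and follows the paper's route. You compute the odd Hermite coefficients of $\operatorname{sgn}$, verify the summability hypothesis of \cref{thm:clt}, remove the first chaos using $\var h(1,R)=o(R^d)$, and apply \cref{thm:clt}; your dominated-convergence argument for $\var D(R)\sim CR^d$ fills in a step the paper leaves implicit.

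One harmless slip: since $(2k)!=(q-1)!=q!/q$, in fact $a_q^2/(q!)^2\asymp 1/(q!\,q^{3/2})$, not $1/(q!\sqrt q)$. This only strengthens your summability, and it even lets you dominate the $q\ge5$ terms by a constant times $a_q^2/q!$, which is summable by Parseval since $\sum_q a_q^2/q! = E[\operatorname{sgn}(Z)^2]=1$.
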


Once again the result can be turned into a $\lambda\to\infty$ limit theorem by scaling, and it follows from \cref{thm:clt} once it is observed that the defect has Wiener chaos decomposition
\begin{equation}\label{eq:defectwiener}
        D(R)=\sum_{q=0}^\infty \frac{\sqrt 2 H_{2q}(0)}{\sqrt\pi (2q+1)!}h(2q+1,R),
\end{equation}
converging in $L^2$ and satisfying the sufficient condition on chaos coefficients displayed above. Since I have omitted chaos projections with $q=1$ above because their Central Limit Theorem is trivial, I recall here for the sake of \cref{thm:defect} that $\var h(1, R)=o\left(R^d\right)$.

The results of this note may be adapted to prove high frequency limit theorems on different underlying  geometrical objects for which the appropriate random wave model is locally asymptotically equivalent to Euclidean ones (as showcased in \cite{dierickx} for nodal statistics).

It is worth mentioning that the Malliavin-Stein approach also allows quantitative estimates on the rate of convergence given the asymptotics of fourth moments, or equivalently of the kernel contractions \eqref{eq:contractions}. In this regard, I simply refer the reader to \cite{Nourdin2012} and opt not to report any such results, because they would just be straightforward applications of the general theory. In a similar fashion, let me remark that the arguments of \cite{maininourdin} allow to obtain Central Limit Theorems for more general isotropic Gaussian random fields (as superpositions of Berry's model at different frequencies) when supplied with the now complete contraction asymptotics.

I will recall the notation as it is used throughout the article, here I only mention that Landau $O$'s and $o$'s have their usual meaning, $\lesssim$ denotes inequality up to a positive constant (independent of any involved parameter) and $\sim$ denotes asymptotic equivalence. 

\begin{acknowledgements}
    I wish to thank G. Peccati for introducing me to the problem considered in this paper, and L. Maini, D. Marinucci and A. P. Todino for many conversations on the topic.
\end{acknowledgements}

\section{2D Contractions}\label{sec:2Dcontractions}

In this Section $d=2$ is fixed, so it is convenient to denote by $\sigma=\frac1{2\pi}\sigma_1$ the uniform distribution on the circle $S^1\subset \R^2$. The covariance of Berry's model is thus $J_0(|z|)=\int_{S^1}e^{i\omega\cdot z}d\sigma(\omega)$, and the contraction estimate to be proved is:

\begin{theorem}\label{thm:2dcontraction}
    Let $d=2$. For $R\to\infty$, $I(R) := \norm{K_R}_{L^2(\sigma^2)}^2\sim \frac{256}{9}R^3$.
\end{theorem}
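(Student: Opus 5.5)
The plan is to pass from the spherical integrals defining $K_R$ to an explicit integral against the density of $\theta+\vartheta$, and then to localize. Since $Q_R(x)=2\pi R^2\,\frac{J_1(R|x|)}{R|x|}$ and the law $\mu=\sigma*\sigma$ of $\theta+\vartheta$ has the explicit density
\begin{equation*}
    p(z)=\frac{\one_{|z|<2}}{\pi^2|z|\sqrt{4-|z|^2}},
\end{equation*}
we can write
\begin{equation*}
    K_R(a,b)=\int_{\R^2}Q_R(a+z)\,Q_R(b-z)\,p(z)\,dz,
    \qquad
    I(R)=\int_{S^1\times S^1}|K_R(a,b)|^2\,d\sigma(a)\,d\sigma(b).
\end{equation*}
$Q_R$ is an approximate identity of mass $(2\pi)^2$ at scale $R^{-1}$, with tails of size $|x|^{-3/2}$. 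So $K_R(a,b)$ should be large only when $b\approx -a$, and the main contribution should come from $z$ near $-a$. This point lies exactly on the circle $|z|=2$, where $p$ has an inverse square root singularity. That singularity is what produces the anomalous power of $R$.

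First I split the $z$-integral into three regions: a neighbourhood $|z+a|\le\delta$ of the singular point, a neighbourhood of the other boundary points $|z|=2$, and the bulk where $p$ is smooth. In the bulk and away from $-a$, I will bound the product $Q_R(a+z)Q_R(b-z)$ using the decay $|J_1(r)|\lesssim r^{-1/2}$. This should show that these regions contribute $O(R^{3-\epsilon})$ to $I(R)$ after integrating in $(a,b)$. The worst case of this bound is the singular weight $|z|^{-1}$ near $z=0$, which I expect to be harmless because $Q_R(a+z)Q_R(b-z)$ is small there for $|a|=|b|=1$.

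In the singular region I rescale $w=R(z+a)$ and $v=R(a+b)$. Writing $4-|z|^2\approx -4\,a\cdot w/R$ (up to orientation), the local contribution becomes
\begin{equation*}
    K_R(a,b)\approx R^{2+\frac12}\,G_a(v),\qquad
    G_a(v)=\frac{1}{2\pi^2}\int_{\{a\cdot w<0\}}\frac{q(w)\,q(v-w)}{\sqrt{-a\cdot w}}\,dw,
\end{equation*}
where $q(w)=2\pi J_1(|w|)/|w|$. I then compute $\int|G_a(v)|^2$ over the one-dimensional set of $v$ obtained as $b$ runs along the arc near $-a$. I will carry this out in Fourier variables: the Fourier transform of $q$ is $(2\pi)^2\one_{B_1}$, and the half-space weight $(-a\cdot w)_+^{-1/2}$ has an explicit homogeneous Fourier transform. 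This should turn $\|G_a\|^2$ into an elementary integral over the unit disc, which I expect to yield the constant $\frac{256}{9}$.

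The main obstacle is the exponent bookkeeping in this step. Naive scaling gives $R^5\cdot R^{-1}=R^4$, whereas the claim is $R^3$. So I must show either that the leading term of $G_a$ is not square integrable along the one-dimensional $v$-set (with the $L^2$ mass concentrating on a logarithmic or thinner scale), or that it cancels by oscillation in the Bessel tails. I would first test this by computing $\widehat{G_a}$ exactly. Its support in an intersection of unit discs, together with the boundary behaviour of the half-space weight, should reveal whether an extra factor $R^{-1}$ arises via stationary phase in the tangential direction. Whatever the mechanism, I would then control the error terms (the curvature of the circle $|z|=2$, and the replacement of $|z|^{-1}$ by $\frac12$) uniformly, by dominated convergence in the rescaled variables.
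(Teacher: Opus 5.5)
There is a genuine gap, and it comes from a false geometric premise. The point $z=-a$ where $Q_R(a+z)$ peaks has $|z|=|a|=1$, not $2$. It lies in the interior of the support of $p$, where $p$ is smooth and equal to $p(1)=\frac{1}{\pi^2\sqrt3}$. The singular circle $|z|=2$ corresponds to $\theta=\vartheta$, whereas $z=-a$ corresponds to $\theta,\vartheta$ at angle $2\pi/3$.

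Your local expansion is therefore wrong. For $z=-a+w/R$ one has $4-|z|^2=3+2a\cdot w/R-|w|^2/R^2\to 3$, not $-4a\cdot w/R$. So there is no half-space weight, no $R^{5/2}$ scaling of $K_R$, and no ``anomalous power'' to explain. The $R^4$ versus $R^3$ conflict you flag as the main obstacle is an artifact of this error. Searching for an extra $R^{-1}$ through cancellations in $\widehat{G_a}$ would lead nowhere, because once the local picture is corrected, naive scaling already gives $R^3$.

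With $p$ frozen at $p(1)$, your rescaling gives $K_R(a,b)\approx R^2p(1)(q*q)(v)$. By the reproducing identity $q*q=(2\pi)^2q$ (equivalently $Q_R*Q_R=(2\pi)^2Q_R$), this equals $\frac{4}{\sqrt3}R^2q(v)=\frac{4}{\sqrt3}Q_R(a+b)$. No Fourier computation with a homogeneous weight is needed.

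This is the paper's route. It proves $K_R(a,b)-\frac{4}{\sqrt3}Q_R(a+b)=O(R\log R)$ uniformly in $a,b$, splitting the $z$-integral into three regions:
\begin{itemize}
\item $|z|\ge 2$, which contributes $O(R)$ via $|Q_R(\xi)|\lesssim R^2(1+R|\xi|)^{-3/2}$.
\item $\delta$-neighbourhoods of $-a$ \emph{and of $b$}. There $|p(z)-p(1)|\lesssim|z+a|$, respectively $|z-b|$, which gives $O(R\log R)$. Note that you must treat the peak at $z=b$ as well, not only the one at $-a$.
\item The rest. There both $Q_R$ factors are $O(R^{1/2})$ and $p-p(1)\in L^1$, which gives $O(R)$; this region absorbs both singular circles $|z|=0$ and $|z|=2$.
\end{itemize}

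Then one computes
\begin{equation*}
\int\!\!\int Q_R(a+b)^2\,d\sigma(a)\,d\sigma(b)=4\pi R^3\int_0^{2R}\frac{J_1(s)^2}{s^2\sqrt{1-(s/2R)^2}}\,ds\sim\frac{16}{3}R^3,
\end{equation*}
using $\int_0^\infty J_1(s)^2s^{-2}\,ds=\frac{4}{3\pi}$. Cauchy--Schwarz bounds the cross term by $O(R^{5/2}\log R)$ and the squared remainder by $O(R^2\log^2R)$. This yields $I(R)\sim\frac{16}{3}\cdot\frac{16}{3}R^3=\frac{256}{9}R^3$.

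The factor $\frac13$ inside $\frac{256}{9}$ comes precisely from $p(1)^2$. This confirms that the leading term arises at a smooth point of $p$, not at its inverse square-root singularity.
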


This Section is devoted to the proof of \cref{thm:2dcontraction}. The first step is a change of variables. 
The squared Bessel function $J_0(|z|)^2$ is the Fourier transform of $\sigma*\sigma$ (the distribution of a 2-steps Pearson random walk, as observed in \cite{Grotto2023b}, $\ast$ denotes convolution):
\begin{equation*}
    J_0(|z|)^2
 =\int_{S^1}\int_{S^1} e^{i(\omega+\omega')z}d\sigma(\omega)d\sigma(\omega')
 =\int_{\R^2}e^{ipz}\rho(p)d p,\quad 
 \rho(p)=\frac{\one_{(0,2)}(|p|)}{\pi^2|p|\sqrt{4-|p|^2}}.
\end{equation*}
On the other hand, the Fourier transform of a ball of radius $R$ is
\begin{equation*}
    Q_R(\xi)=\int_{|x|\leq R}e^{i\xi\cdot x}d x
    =2\pi R |\xi|^{-1} J_1(R|\xi|).
\end{equation*}
By Fubini theorem and the observation above, since $\rho$ and $Q_R$ are radial,
\begin{multline*}
    K_R(a,b)=\int_{\left(S^{d-1}\right)^2} Q_R(a+\theta+\vartheta) Q_R(b-\theta-\vartheta) d\sigma(\theta) d\sigma(\vartheta)\\
    =\int_{\R^2}\rho(p) Q_R(a+p)Q_R(b-p)d p.
\end{multline*}
This expression allows to directly individuate the regions that contribute the most to the integral $K_R$. The function $Q_R(\xi)$ peaks at $\xi=0$, so $K_R(a,b)$ gets most of its value from the regions close to $-a,b$ (where $\rho$ is close to $\rho(1)$) and it is largest when they are antipodal. 

To make this quantitative, recall the standard estimate $|J_1(t)|\lesssim t\wedge t^{-1/2}$ (for all $t>0$), from which
\begin{equation}\label{eq:Hdim2}
    |Q_R(\xi)|=\abs{R^2\frac{2\pi J_1(R|\xi|)}{R|\xi|}}
 \lesssim
 \frac{R^2}{(1+R|\xi|)^{3/2}}, \quad |\xi|>0.
\end{equation}
Observe also that $ Q_R*Q_R=(2\pi)^2Q_R$ (trivially by taking Fourier transforms) so
\begin{equation*}
    \rho(1)\int_{\R^2}Q_R(a+p)Q_R(b-p)dp
 =
 \rho(1)(Q_R*Q_R)(a+b)
 =
 (2\pi)^2\rho(1)Q_R(a+b).
\end{equation*}
I therefore claim the first order expansion:

\begin{lemma}
    As $R\to \infty$, uniformly in $a,b\in S^1$,
    \begin{equation*}
    E_R(a,b):=K_R(a,b)-  \frac{4}{\sqrt 3}Q_R(a+b)=O(R\log R).
    \end{equation*}
\end{lemma}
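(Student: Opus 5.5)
Check the constant first: $\rho(1)=\frac{1}{\pi^2\sqrt3}$, so $(2\pi)^2\rho(1)=\frac{4}{\sqrt3}$, which is the coefficient in the claim. Hence
\begin{equation*}
E_R(a,b)=\int_{\R^2}\pa{\rho(p)-\rho(1)}Q_R(a+p)Q_R(b-p)\,dp ,
\end{equation*}
where $\rho$ is extended by zero outside $\set{|p|<2}$; the subtracted integral is over all of $\R^2$, since that is where the identity $Q_R*Q_R=(2\pi)^2Q_R$ holds. Write $g(p)=\rho(p)-\rho(1)$. It vanishes at $p=-a$ and at $p=b$, since both have modulus $1$, and it is smooth near $|p|=1$ with $|g(p)|\lesssim \big||p|-1\big|$ for, say, $\big||p|-1\big|\le 1/2$. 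Away from that annulus $g$ has two kinds of singularity: an integrable $|p|^{-1}$ singularity at the origin, and an integrable $(2-|p|)^{-1/2}$ singularity at the circle $|p|=2$, where $g$ also jumps to the constant $-\rho(1)$ outside the disk. The plan is to bound $\int |g|\,|Q_R(a+p)|\,|Q_R(b-p)|\,dp$ using only the pointwise bound \eqref{eq:Hdim2}, splitting into regions.

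\textbf{Step 1 (near the peaks).} On a ball $|p+a|\le \delta$, use $|g(p)|\lesssim|p+a|$, the bound $|Q_R(a+p)|\lesssim R^2(1+R|p+a|)^{-3/2}$, and the crude bound $|Q_R(b-p)|\le \pi R^2$. Setting $s=|p+a|$ gives
\begin{equation*}
R^4\int_0^\delta \frac{s\cdot s\,ds}{(1+Rs)^{3/2}}\sim R^{4-3/2}\delta^{3/2}.
\end{equation*}
This is too large, so the crude bound on the other factor must be replaced. The better bound is $|Q_R(b-p)|\lesssim R^2(1+R|a+b-(a+p)|)^{-3/2}$. Writing $c=a+b$ and $x=a+p$, the near-peak integrand is bounded by
\begin{equation*}
R^4\frac{|x|}{(1+R|x|)^{3/2}(1+R|c-x|)^{3/2}} .
\end{equation*}
The weight $|x|$ is accurate only because $g$ vanishes to first order at $x=0$, that is, at $p=-a$. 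Symmetrically, $g$ vanishes at $x=c$, that is, at $p=b$. So on the annulus we can use the bound $|g|\lesssim \min(|x|,|x-c|)$, which holds whenever $p$ is near the unit circle; in fact $|g|\lesssim \mathrm{dist}(p,S^1)$ there.

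\textbf{Step 2 (the key integral).} Bound $\int_{|x|\lesssim 3}\min(|x|,|x-c|)\,R^4(1+R|x|)^{-3/2}(1+R|x-c|)^{-3/2}dx$ uniformly in $|c|\le 2$. Split the domain according to which of $x$, $x-c$ is closer to zero. In the region where $|x|\le|x-c|$, we have $|x-c|\ge |c|/2$, and one gets
\begin{equation*}
R^4\int \frac{|x|\,dx}{(1+R|x|)^{3/2}(1+R|x-c|)^{3/2}}.
\end{equation*}
Consider first $|c|\lesssim 1/R$. Substituting $y=Rx$ gives $R^{4-3}\int \frac{|y|\,dy}{(1+|y|)^{3/2}(1+|y-Rc|)^{3/2}}$ over $|y|\lesssim R$, and the integrand decays like $|y|^{-2}$ in two dimensions. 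This gives $R\log R$. For general $c$ the same scaling produces $\int |y|(1+|y|)^{-3/2}(1+|y-Rc|)^{-3/2}dy\lesssim \log R$, since each of the two regions has an $|y|^{-2}$ tail up to scale $R$. So this piece is $O(R\log R)$ uniformly in $c$, and this logarithm is the source of the $R\log R$ in the claim.

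\textbf{Step 3 (singular regions).} Handle the regions near $p=0$ and near $|p|=2$, and the exterior $|p|\ge 2$. At least one of the two factors is small there unless $-a$ or $b$ is close to those sets. But $|{-a}|=|b|=1$, so both points stay at distance $\ge 1/2$ from $\set{|p|<1/2}\cup\set{|p|>3/2}$. In that region at most one of $|a+p|$, $|b-p|$ can be small.
\begin{itemize}
\item If both are $\gtrsim 1$, the product is $\lesssim R^4 R^{-3}=R$ pointwise times the integrable $|g|$; outside the disk $g$ is constant, and there the integrand is $\lesssim R\,|p|^{-3}$ for $|p|$ large, still integrable. This gives $O(R)$.
\item If one factor, say $|Q_R(b-p)|$, is large, the other is $\lesssim R^{1/2}$. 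We then need $\int |g|\,|Q_R(b-p)|\,dp$ over a region at distance $\ge 1/2$ from the singular sets where $g$ is bounded, and $\int R^2(1+R|x|)^{-3/2}dx$ over $|x|\le 1$ is $\lesssim R^{1/2}$. The total is $O(R)$.
\end{itemize}

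\textbf{Main obstacle.} The step I expect to be delicate is the uniformity in Step 2 as $c=a+b\to 0$, when the two peaks merge (the antipodal case, which is where $K_R$ is largest). There the vanishing of $g$ at only one point of the merged peak no longer kills it, because both factors are of size $R^2$ simultaneously. The saving is that $g$ vanishes along the whole unit circle, which passes through the merged peak. So I would use $|g(p)|\lesssim \mathrm{dist}(p,S^1)\le |p+a|$ directly; near the merged peak, $|p+a|$ is comparable to the distance to either peak. The integral $R^4\int |x|(1+R|x|)^{-3}dx$ gives $R\int_1^R t^{-1}\,dt=R\log R$, consistent with Step 2.
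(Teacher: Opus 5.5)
Your proof is correct and follows essentially the same route as the paper. You subtract $\rho(1)$ using $Q_R*Q_R=(2\pi)^2Q_R$ and exploit the linear vanishing of $\rho-\rho(1)$ at $-a$ and $b$ together with the rescaled bound $\sup_z\int_{|y|\lesssim R}|y|(1+|y|)^{-3/2}(1+|z-y|)^{-3/2}dy\lesssim\log R$; the remaining regions (the singularities of $\rho$ and $|p|\ge 2$) give $O(R)$ because both factors are $O(R^{1/2})$ there. Working on the whole annulus $\big||p|-1\big|\le 1/2$ with $|g|\lesssim\min(|x|,|x-c|)$, instead of the paper's balls $N_a,N_b$, is only a cosmetic variant. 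The antipodal case you flag as the main obstacle is already covered by that uniform-in-$z$ bound, since first-order vanishing of $g$ at the single merged point $-a=b$ suffices.
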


The numerical constant is simply $(2\pi)^2\rho(1)$.

\begin{proof}
    Split $E_R=E_{in}+E_{out}$ into integrals
\begin{gather*}
    E_{in} := \int_{|p|<2}(\rho(p)-\rho(1))Q_R(a+p)Q_R(b-p)dp,\\
    E_{out}  := -\rho(1)\int_{|p|\geq 2}Q_R(a+p)Q_R(b-p)dp,
\end{gather*}
over the support of $\rho$ and outside of it. I estimate separately the two summands starting from $E_{out}$. If $|p|\ge2$, then $|a+p|,|b-p|\geq |p|/2$ and by \eqref{eq:Hdim2}
\begin{equation*}
    |Q_R(a+p)Q_R(b-p)| \lesssim \tfrac{R}{|p|^3},\quad 
    |E_{out}| \lesssim R\int_{|p|\ge2}|p|^{-3}dp \lesssim R.
\end{equation*}
Consider now the inner region. The relevant contribution should come from neighborhoods of $-a,b$, so let
\begin{equation*}
    N_a:=\{p:|p+a|\le\delta\},
 \quad
 N_b:=\{p:|p-b|\le\delta\},
 \quad 0<\delta<1/4,
\end{equation*}
(possibly overlapping, but only an upper bound is needed) on which $\rho$ and its derivative are bounded and 
\begin{equation*}
     |\rho(p)-\rho(1)|\lesssim |p+a|\quad\text{on }N_a,\qquad
 |\rho(p)-\rho(1)|\lesssim |p-b|\quad\text{on }N_b.
\end{equation*}
Write $\Omega:=\{|p|<2\}\setminus(N_a\cup N_b)$ and decompose further $|E_{in}|\leq |E(N_a)|+|E(N_b)|+|E(\Omega)|$ according to the regions of integration. 

On $N_a$ ($N_b$ is identical), changing variables $x=p+a$ and rescaling,
\begin{align*}
    E(N_a)
    &\lesssim \int_{|x|\le\delta}|x|\,|Q_R(x)|\,|Q_R(a+b-x)|dx\\
    &\lesssim R^4\int_{|x|\le\delta} |x|(1+R|x|)^{-3/2}(1+R|a+b-x|)^{-3/2}dx\\
    &\lesssim R\int_{|y|\le\delta R} 
        \frac{|y|dy}{(1+|y|)^{3/2}(1+|R(a+b)-y|)^{3/2}}\lesssim R\log R.
\end{align*}
The last step follows by splitting once more the integration domain according to the factors in the denominator of the integrand: in fact
\begin{equation*}
        \sup_{z\in\R^2} 
        \int_{|y|\le R} 
        \frac{|y|dy}{(1+|y|)^{3/2}(1+|z-y|)^{3/2}}\lesssim \log R.
\end{equation*}

Consider now $E(\Omega)$, which gives a lower order contribution. Indeed, on $\Omega$ by definition $|a+p|\ge\delta$ and $|b-p|\ge\delta$, so
\begin{equation*}
    |Q_R(a+p)|\lesssim R^{1/2}, \quad
    |Q_R(b-p)|\lesssim R^{1/2}, \quad 
    |Q_R(a+p)Q_R(b-p)|\lesssim R.
\end{equation*}
Since $\int_{|p|\leq 2}|\rho(p)-\rho(1)|dp<\infty$, 
\begin{equation*}
    \int_{\Omega}
 |\rho(p)-\rho(1)|\,|Q_R(a+p)Q_R(b-p)|dp
 \lesssim R,\quad |E_{in}|\lesssim R\log R.\qedhere
\end{equation*}
\end{proof}

The first order term in the expansion of $I(R)$ as $R\to\infty$ should then be given by the integral
\begin{multline*}
    A_R= \int_{S^1}\int_{S^1}Q_R(a+b)^2 d\sigma(a)d\sigma(b)
    =\frac{1}{2\pi}\int_0^{2\pi}  Q_R(e_1+e^{i\theta})^2d\theta\\
    =4 \pi R^3 \int_0^{2 R} \frac{J_1(s)^2}{s^2 \sqrt{1-(s / 2 R)^2}} d s,
\end{multline*}
which is reduced to a one-dimensional integral by rotation invariance, fixing $a=e_1=(1,0)$.
For fixed $s$ the square root in the denominator converges to 1, so splitting the integration domain into $[0, M],[M, R]$, and $[R, 2 R]$ and using once again $|J_1(t)|\lesssim t\wedge t^{-1/2}$ one gets
\begin{equation*}
    \lim_{R\to \infty}\frac{A_R}{4 \pi R^3}=\int_0^{\infty} \frac{J_1(s)^2}{s^2} d s=\frac{4}{3\pi},
\end{equation*}
where the last step is a known integral identity \cite[6.574(2)]{gradrhy}.
The proof of \cref{thm:2dcontraction} is now concluded by controlling the remainder
\begin{equation*}
    I(R)-\frac{16}{3}A_R=O(R^2\log^2 R)+O(R^{5/2}\log R),
\end{equation*}
by the above estimates and Cauchy-Schwarz inequality.

\section{3D Contractions}\label{sec:3Dcontractions}

I will adopt symbols already used in the previous paragraph to indicate the corresponding objects in the 3D case, without any possible ambiguity. For instance, $\sigma=\frac1{4\pi}\sigma_2$ is now the uniform distribution on $S^2\subset \R^3$ and
\begin{equation*}
    \sinc(|z|)=\frac{\sin |z|}{|z|}=\int_{S^2}e^{i\omega\cdot z}d\sigma(\omega)
\end{equation*}
is the covariance function of Berry's model.

\begin{theorem}\label{thm:3dcontraction}
    Let $d=3$. For $R\to\infty$, $I(R) := 
         \norm{K_R}_{L^2(S^2\times S^2)}^2
         \sim 
         2\pi^6R^4$.
\end{theorem}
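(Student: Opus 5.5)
The plan mirrors the 2D argument of \cref{sec:2Dcontractions}: isolate a leading term proportional to $Q_R(a+b)$, compute its $L^2(\sigma^2)$ norm explicitly, and show that the remainder is small in $L^2$. The difference is that the remainder has to be controlled in $L^2(\sigma^2)$ rather than pointwise, because of the critical scaling in $d=3$.

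\emph{Leading term.} The function $\sinc(|z|)^2$ is the Fourier transform of $\sigma*\sigma$. Since $|\theta+\vartheta|$ has density $s/2$ on $[0,2]$, this measure has density
\begin{equation*}
\rho(p)=\frac{\one_{(0,2)}(|p|)}{8\pi|p|}.
\end{equation*}
As in 2D, $K_R(a,b)=\int_{\R^3}\rho(p)Q_R(a+p)Q_R(b-p)\,dp$. Here $Q_R(\xi)=4\pi R^3 g(R|\xi|)$ with $g(t)=(\sin t-t\cos t)/t^3$, so that
\begin{equation*}
|Q_R(\xi)|\lesssim R^3(1+R|\xi|)^{-2}.
\end{equation*}
Using $Q_R*Q_R=(2\pi)^3Q_R$, the candidate main term is $(2\pi)^3\rho(1)Q_R(a+b)=\pi^2Q_R(a+b)$. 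By rotation invariance, and since $|a+b|$ has density $s/2$ on $[0,2]$,
\begin{equation*}
A_R=\int\!\!\int Q_R(a+b)^2\,d\sigma\,d\sigma=8\pi^2R^4\int_0^{2R}t\,g(t)^2\,dt .
\end{equation*}
Because $t\,g(t)^2\sim \cos^2 t/t^3$ is integrable at infinity, dominated convergence gives $A_R\sim 8\pi^2R^4\int_0^\infty t g^2\,dt$. I would then evaluate $\int_0^\infty t\,g(t)^2\,dt=\tfrac14$ by an elementary integration (or a tabulated Bessel integral, since $g(t)=t^{-3/2}J_{3/2}(t)\sqrt{\pi/2}$). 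This yields $\pi^4A_R\sim 2\pi^6R^4$, which is the claimed constant.

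\emph{Remainder.} Set $E_R=K_R-\pi^2Q_R(a+b)$. It suffices to show $\norm{E_R}_{L^2(\sigma^2)}=o(R^2)$, after which Cauchy--Schwarz concludes as in 2D. As before, split $E_R$ into the part over $|p|\ge2$, the neighbourhoods $N_a,N_b$, and the rest $\Omega$.
\begin{itemize}
\item On $|p|\geq 2$ the crude bound gives $|Q_R(a+p)Q_R(b-p)|\lesssim R^2|p|^{-4}$, hence an $O(R^2)$ pointwise bound. This is not small enough, so even this piece must be bounded in $L^2$: one averages over $b$ and uses the oscillation of the kernel.
\item On $\Omega$ both factors are $O(R)$ pointwise. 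After averaging in $(a,b)$, the set where either factor is large has small measure, and this piece should give $O(R^2)$ with a gain.
\end{itemize}

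\emph{Main obstacle.} The hard part is $N_a$ (and symmetrically $N_b$), where the pointwise estimate from the 2D proof is only $O(R^3)$ and even a Minkowski/averaging bound gives $O(R^3\delta^2)$. Here one must exploit oscillation: the profile $g(R|x|)\approx -\cos(R|x|)/(R|x|)^2$ oscillates, while $\rho(p)-\rho(1)$ is smooth and vanishes linearly near $|p|=1$. My first attempt would be to work on the Fourier side. Write
\begin{equation*}
E_{in}(a,b)=\int_{B_R}\!\int_{B_R}e^{i(a\cdot y-b\cdot y')}\,\widehat{\chi(\rho-\rho(1))}(y-y')\,dy\,dy'
\end{equation*}
with $\chi$ a smooth cutoff. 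Then compute $\norm{E_{in}}_{L^2(\sigma^2)}^2$ by integrating out $a,b$, which produces factors $\sinc(|y_1-y_2|)\,\sinc(|y'_1-y'_2|)$. The goal is to bound the resulting fourfold integral over $B_R^4$ by $o(R^4)$, using the decay of the Fourier transform of the cutoff difference together with the $|y|^{-1}$ decay of $\sinc$.

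If this proves too lossy, the fallback is to Taylor expand $\rho(p)-\rho(1)=c(|p|-1)+O((|p|-1)^2)$. The quadratic part is then handled by the averaging bound, which gains an extra factor $R^{-1}$. For the linear part, I would integrate by parts in the radial variable against $\cos(R|x|)$, gaining $R^{-1}$. This is the step whose bookkeeping I expect to dominate the proof.
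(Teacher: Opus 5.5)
Your leading-term computation is correct and matches the paper. The density $\rho(p)=\one_{\{|p|<2\}}/(8\pi|p|)$, the main term $\pi^2Q_R(a+b)$, the value $\int_0^\infty t\,g(t)^2dt=\tfrac14$, and hence $\pi^4A_R\sim2\pi^6R^4$ all agree. So does the Cauchy--Schwarz reduction to $\norm{E_R}_{L^2(\sigma^2)}=o(R^2)$.

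\textbf{The remainder estimate is missing, and the difficulty is misplaced.} That estimate is the entire content of the theorem, and your proposal does not prove it.

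\emph{$N_a$ is harmless.} Keep the Lipschitz factor $|p+a|$ and use \eqref{eq:usefulineqs}. Then $|E(N_a)|\lesssim R^2\bigl(1+\log^+(\delta/|a+b|)\bigr)$ for $|a+b|\le 2\delta$, and $|E(N_a)|\lesssim R^2\delta^2|a+b|^{-2}$ otherwise. Integrating against the density $r/2$ of $|a+b|$ gives $\norm{E(N_a)}_{L^2(\sigma^2)}\lesssim R^2\delta$, which vanishes relative to $R^2$ as $\delta\to0$. The figures $O(R^3)$ and $O(R^3\delta^2)$ you quote look like artifacts of discarding that factor or of the lossy Minkowski step.

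\emph{The bulk is the real obstruction.} On $\Omega\cup\{|p|\ge2\}$ one has $Q_R(a+p)Q_R(b-p)\approx16\pi^2R^2\cos(R|a+p|)\cos(R|b-p|)\,|a+p|^{-2}|b-p|^{-2}$. Both factors are of size $\asymp R$ on essentially all of $\Omega$, not on a small set, so your small-measure mechanism for $\Omega$ does not exist. Any estimate that puts absolute values inside the $p$-integral is $\gtrsim R^2$ for every $(a,b)$, and averaging over $(a,b)$ cannot fix that. A gain can only come from cancellation in the phases $R(|a+p|\pm|b-p|)$, which you never carry out.

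\emph{Your two fallbacks do not supply that cancellation.} Radial integration by parts against $\cos(R|x|)$ fails: $Q_R(b-p)$ oscillates at the same frequency, and the combined phases are stationary along the whole line through $-a$ and $b$. The Fourier-side plan just reproduces the fourfold $B_R^4$ integral. There, absolute bounds from $|\sinc(r)|\lesssim(1+r)^{-1}$ give $O(R^4)$ for the squared norm, i.e.\ $O(R^2)$ rather than $o(R^2)$. This borderline is exactly what makes $d=q=3$ critical.

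\textbf{What the paper supplies instead.} The missing ingredient is a pair of lemmas.
\begin{itemize}
\item[(i)] \cref{lem:local}: uniformly for $|a+b|\le M/R$, $R^{-3}K_R(a,b)-\pi^2F(R(a+b))\to0$. This follows by rescaling $p=t/R-a$ and dominated convergence; the moving singularity of $\rho$ costs only $O(1/R)$.
\item[(ii)] \cref{lem:global}: $|E_R(a,b)|\lesssim R^3(1+R|a+b|)^{-2}+R\log(2+R)$. With $h=a+b$, expand $Q_R(s)Q_R(h-s)$ into oscillatory integrals with phases $|s|\mp|h-s|$. Their critical sets are the segment $(0,h)$ and the two complementary rays, with nondegenerate transverse Hessian. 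Stationary phase in the two normal variables therefore gains $R^{-1}$. The remaining one-dimensional integrals along the critical lines converge because $g_a(s)=\rho(s-a)-\rho(1)$ vanishes at $s=0$ and $s=h$; precisely, $|g_a(ue)|\lesssim u(|h|-u)/|ue-a|$ on the segment.
\end{itemize}
Then write $\norm{E_R}^2=\frac{R^4}{2}\int_0^{2R}|G_R(s)|^2s\,ds$. Lemma (i) handles $s\le M$, and Lemma (ii) gives $O(R^4M^{-2})+O(R^2\log^2R)$ on $s\ge M$.

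Your splitting could in principle be completed by running such a stationary-phase argument on $\Omega$ and $\{|p|\ge2\}$ for fixed $\delta$, then letting $\delta\to0$. As written, however, the decisive estimate is absent.
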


The remainder of this Section is devoted to the proof of \cref{thm:3dcontraction}.
Since I am using the same notation of above, the objects in the thesis are still
\begin{equation*}
    I(R)=\int_{S^2}\int_{S^2}K_R(a,b)^2d\sigma(a)d\sigma(b),
    \quad 
     K_R(a,b) := \int_{\R^3}\rho(p)Q_R(a+p)Q_R(b-p)d p.
\end{equation*}
In the case $d=3$ the density $\rho$ of $\sigma\ast\sigma$ is particularly simple,
\begin{equation*}
    \sinc(|z|)^2
 =
 \int_{S^2}\int_{S^2}e^{i(\omega+\omega')\cdot z}
 d\sigma(\omega)d\sigma(\omega')
 =
 \int_{\R^3}e^{ip\cdot z}\rho(p)d p,\quad \rho(p)=\frac{\one_{\{|p|<2\}}}{8\pi |p|}
\end{equation*}
(proved in the context of short random walks in \cite{Borwein2012}, it is a direct consequence of known identities of Bessel functions). As for the Fourier transform of the 3D ball,
\begin{equation*}
     Q_R(\xi)= \int_{B_R}e^{i\xi\cdot x}d x
     =R^3F(R\xi),
     \qquad 
     F(s)  =  4\pi\frac{\sin |s|-|s|\cos |s|}{|s|^3},
\end{equation*}
(derived for instance from \cite[2.3 (9)]{Erdelyi1954}).
I shall repeatedly use the inequalities
\begin{equation}\label{eq:usefulineqs}
    |F(s)|\lesssim \frac1{(1+|s|)^2},
    \quad 
     |Q_R(\xi)|\lesssim \frac{R^3}{(1+R|\xi|)^2}.
\end{equation}

The same considerations of Step 2 in \cref{sec:2Dcontractions} lead to the claim
\begin{equation*}
    K_R(a,b)= \pi^2Q_R(a+b)+\text{remainder}.
\end{equation*}
Once can prove a pointwise bound as in \cref{sec:2Dcontractions}, but the remainder now turns out to be $O(R^2)$, as it is revealed by considering the contribution of antipodal $a,b$ (I shall obtain a precise bound below). Mere scaling now makes it impossible to conclude in the same fashion of above, since after taking squares the ``remainder'' would not be negligible compared to the leading term.
The proof in this case must supply local convergence with a further global uniform bound. The need of a fine control of cancellations in oscillatory integrals will of course emerge in the latter, the local part is obtained similarly to the discussion above.

\begin{lemma}[Local Asymptotic]\label{lem:local}
    For all $0<M<\infty$,
    \begin{equation*}
        \sup_{|a+b|\leq M/R} \left|  \frac1{R^3}K_R(a,b)-\pi^2F(R(a+b)) \right|=o(1),\quad R\to \infty.
    \end{equation*}
\end{lemma}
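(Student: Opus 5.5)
The plan is to rescale and pass to the limit by dominated convergence, mirroring the 2D first-order expansion. Write $c=R(a+b)$, so that $|c|\le M$, and change variables $p=-a+x/R$ in
\[
K_R(a,b)=\int_{\R^3}\rho(p)Q_R(a+p)Q_R(b-p)\,dp .
\]
Since $Q_R(\xi)=R^3F(R\xi)$, this gives
\[
\frac1{R^3}K_R(a,b)=\int_{\R^3}\rho\!\left(-a+\tfrac{x}{R}\right)F(x)\,F(c-x)\,dx .
\]
On the other hand, $(2\pi)^3\rho(1)=\pi^2$, and the identity $F*F=(2\pi)^3F$ (from $Q_1*Q_1=(2\pi)^3Q_1$) yields
\[
\pi^2F(c)=\rho(1)\int_{\R^3}F(x)F(c-x)\,dx .
\]
Hence the quantity to be controlled is
\[
\Delta_R(a,b)=\int_{\R^3}\Bigl(\rho\!\left(-a+\tfrac{x}{R}\right)-\rho(1)\Bigr)F(x)F(c-x)\,dx,
\]
and I need to show $\Delta_R\to0$ uniformly over $a\in S^2$ and $|c|\le M$.

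Split the integral at $|x|\le \delta R$ and $|x|>\delta R$, with $\delta<1/4$ fixed.

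On the inner region, $|-a+x/R|\in[1-\delta,1+\delta]$. There $\rho(p)=\frac{1}{8\pi|p|}$ is Lipschitz, so
\[
\Bigl|\rho\!\left(-a+\tfrac{x}{R}\right)-\rho(1)\Bigr|\lesssim \frac{|x|}{R}\wedge 1 .
\]
Combining this with \eqref{eq:usefulineqs}, the inner contribution is bounded by
\[
\int_{\R^3}\Bigl(\tfrac{|x|}{R}\wedge1\Bigr)(1+|x|)^{-2}(1+|c-x|)^{-2}\,dx .
\]
For $|c|\le M$ the factor $(1+|c-x|)^{-2}$ is comparable to $(1+|x|)^{-2}$ up to constants depending on $M$. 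The integrand is therefore dominated by $(1+|x|)^{-4}$, which is integrable in $\R^3$, and it tends to $0$ pointwise. Dominated convergence then gives $o(1)$, uniformly in $c$ and $a$, because the dominating function does not depend on them. A direct estimate also works: splitting at $|x|=R$ gives $O(R^{-1}\log R)+O(R^{-1})$.

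On the outer region $|x|>\delta R$, I bound $|\rho-\rho(1)|\le\rho+\rho(1)$.

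The $\rho(1)$ part is at most $\int_{|x|>\delta R}(1+|x|)^{-4}dx=O(R^{-1})$.

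The $\rho$ part needs more care because $\rho$ has the singularity $1/|p|$ near $p=0$, that is, near $x=Ra$. There, however, $|x|\sim R$ and $|c-x|\gtrsim R$, so $|F(x)F(c-x)|\lesssim R^{-4}$. Returning to the $p$ variable, this piece is bounded by
\[
R^3\cdot R^{-4}\int_{|p|<2}\rho(p)\,dp\lesssim R^{-1}.
\]
The Jacobian $R^3$ appears when undoing the scaling, since $dx=R^3dp$ and $\rho$ is a probability density. All the bounds are uniform in $a$ and in $|c|\le M$, which proves the lemma.

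The main obstacle is the outer region, specifically keeping track of the Jacobian when undoing the scaling. The saving comes from the $R^{-4}$ decay of the product $F(x)F(c-x)$ when both arguments are of size $R$. This is exactly where the 3D decay $(1+|s|)^{-2}$ suffices. The localisation $|c|\le M$ is essential so that both factors are large simultaneously.
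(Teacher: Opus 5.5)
Your proof is correct and follows essentially the paper's argument. It uses the same rescaling, the same subtraction of $\rho(1)\,F*F=\pi^2F$, and the same near/far split, with the moving singularity at $x=Ra$ controlled by $|F(x)F(c-x)|\lesssim R^{-4}$ plus integrability of $\rho$; your total-mass bound $R^3\cdot R^{-4}\int\rho$ is the paper's estimate $\frac{C_M}{R}\int_{|x|>1/2}\frac{\one_{|x-a|<2}}{|x-a|\,|x|^4}dx$. The one real difference is that the Lipschitz bound $|\rho(-a+x/R)-\rho(1)|\lesssim |x|/R$ lets you merge the paper's regions $D_1$ and $D_2$, replacing the uniform-continuity and double-limit argument in $A$ with an explicit rate $O(R^{-1}\log R)$.
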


\begin{proof}
    Assume $|R(a+b)|\leq M$, and rewrite
\begin{multline*}
     K_R(a,b)= \int_{\R^3}\rho(s-a)Q_R(s)Q_R(a+b-s)d s\\
    =R^3\int_{\R^3} \rho\left(\frac{t}{R}-a\right) F(t)F(R(a+b)-t)d t,
\end{multline*}
using the definitions and scaling variables.
For fixed $t$ observe that $\rho(t/R-a)\to \rho(-a)=\rho(1)$ uniformly in $a$. 
The thesis is then
\begin{equation}\label{eq:d3localinterm}
    \lim_{R\to \infty}\sup_{|a+b|\leq M/R}
\left|
\int_{\R^3}
\left[
\rho\left(\frac tR-a\right)-\rho(1)
\right]
F(t)F(R(a+b)-t)dt
\right|=0,
\end{equation}
because $F*F=(2\pi)^3F$. The factor $\rho(t/R-a)$ is singular in the moving point $t=Ra$, so I proceed once again by dividing the integration domain into:
\begin{equation*}
    D_1=\{|t|\le A\}, \qquad
    D_2=\{A<|t|\le R/2\},\qquad
    D_3=\{|t|>R/2\},
\end{equation*}
for some $0<A<R/2$, and denoting by $E(D_i)$ the corresponding parts of the integral in \eqref{eq:d3localinterm}.

\emph{Region $D_1$.}
For all $a\in S^2$ and $|t|\leq A$, $1/2\leq |t/R-a|\leq 3/2$. Now $\rho$ is uniformly continuous on this annulus,
\begin{equation*}
    \omega_R(A):=
\sup_{a\in S^2,\,|t|\le A}
\left|
\rho(t/R-a)-\rho(1)
\right|,\quad \lim_{R\to\infty}\omega_R(A)=0.
\end{equation*}
As a consequence,
\begin{equation*}
    \sup_{|a+b|\leq M/R} |E(D_1)|\leq \omega_R(A)
\sup_{|a+b|\leq M/R}
\int_{|t|\le A}|F(t)F(\eta-t)|dt=o(1),\quad R\to\infty.
\end{equation*}
Given this, it is enough to prove that:
\begin{gather*}
    \lim_{A\to\infty} \limsup_{R\to\infty} \sup_{|a+b|\leq M/R} \int_{|t|>A}
\rho\left(\frac tR-a\right) |F(t)F(\eta-t)|dt=0,\\
    \lim_{A\to\infty}\sup_{|\eta|\le M}\int_{|t|>A}|F(t)F(\eta-t)|dt=0.
\end{gather*}
I focus on the first one, the latter is easier and it follows analogously.

\emph{Region $D_2$.} In this region, $\left|\frac tR-a\right|\ge 1-\frac{|t|}{R}\ge \frac12$ so $\rho(t/R-a)\le C$ is bounded, so
\begin{equation*}
    \int_{A<|t|\le R/2}
\rho\left(\frac tR-a\right)
|F(t)F(\eta-t)|dt
\le
C\int_{|t|>A}|F(t)F(\eta-t)|dt .
\end{equation*}
For $A>2M$, uniformly in $|a+b|\le M/B$, on $|t|>A$
\begin{equation*} 
|F(t)F(R(a+b)-t)|
\leq \frac{C_M}{(1+|t|^4)},
 \end{equation*}
and therefore this contribution vanishes as $A\to\infty$.

\emph{Region $D_3$.} This is the region where the moving singularity $t=Ra$ may appear. Using the explicit form of $\rho$,
\begin{multline*}
    \int_{|t|>R/2}
\rho\left(\frac tR-a\right)
|F(t)F(\eta-t)|dt \\
\leq
C_M
\int_{|t|>R/2} 
\frac{\one_{|t/R-a|<2}}{|t/R-a|\, |t|^4}dt
=C_M \frac1R
\int_{|x|>1/2}
\frac{\one_{|x-a|<2} }{|x-a|\, |x|^4}dx .
\end{multline*}
The last integral is uniformly bounded in $a\in S^2$, because $|x|^{-4}$ is bounded on
$\{|x|>1/2\}$ and $|x-a|^{-1}$ is locally integrable in $d=3$.
\end{proof}

\begin{lemma}\label{lem:global}
Uniformly in $a,b\in S^2$ with $a\neq \pm b$,
\begin{equation}\label{eq:globalKR}
    |K_R(a,b)|\lesssim \frac{R^3}{(1+R|a+b|)^{2}}+R\log(2+R).
\end{equation}
\end{lemma}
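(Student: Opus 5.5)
The plan is to follow the splitting used for the 2D expansion, but to exploit oscillation where absolute-value bounds are too weak. First, subtract the constant part of the density. Since $Q_R*Q_R=(2\pi)^3Q_R$ and $(2\pi)^3\rho(1)=\pi^2$,
\begin{equation*}
K_R(a,b)=\pi^2Q_R(a+b)+\int_{\R^3}\bigl(\rho(p)-\rho(1)\bigr)Q_R(a+p)Q_R(b-p)\,dp .
\end{equation*}
Here $\rho(1)$ is understood as a constant on all of $\R^3$, so the difference equals $-\rho(1)$ for $|p|\geq 2$. By \eqref{eq:usefulineqs} the first summand is bounded by the first term in \eqref{eq:globalKR}. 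Everything therefore reduces to bounding the remainder by $R\log(2+R)$ uniformly in $a,b$.

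A naive estimate with \eqref{eq:usefulineqs} fails. The product $|Q_R(a+p)Q_R(b-p)|\lesssim R^2|a+p|^{-2}|b-p|^{-2}$ away from $-a,b$ already gives $R^2$. Near $-a$ the factor $|\rho(p)-\rho(1)|\lesssim|p+a|$ only yields $R^2\log R$. So I would expand
\begin{equation*}
F(s)=-4\pi\frac{\cos|s|}{|s|^2}+G(s),\qquad |G(s)|\lesssim (1+|s|)^{-3},
\end{equation*}
valid for $|s|\geq1$. Near the origin I keep $F$ itself, using smooth cutoffs at scale $|a+p|\sim 1/R$ and $|b-p|\sim1/R$. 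Every term containing at least one factor $R^3G(R\cdot)$ or a cutoff piece is then bounded in absolute value. For instance $R^3|G(R\xi)|\lesssim R^{-0}|\xi|^{-3}\wedge R^3$, paired with $R|\eta|^{-2}$ and the weight $|\rho-\rho(1)|$, which is integrable against $|p|^{-1}$ and bounded by $|p+a|$ near $-a$. This produces at most $R\log R$: the $|\xi|^{-3}$ singularity integrated down to scale $1/R$ is exactly where the logarithm appears. This bookkeeping is routine but must be done uniformly in $a,b$, including the case where $-a$ and $b$ are close.

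The main term is then
\begin{equation*}
R^2\int\bigl(\rho(p)-\rho(1)\bigr)\frac{\cos(R|a+p|)\cos(R|b-p|)}{|a+p|^2|b-p|^2}\chi(p)\,dp .
\end{equation*}
Here $\chi$ removes the $1/R$-neighbourhoods of $-a,b$. Writing the product of cosines as half the sum of $\cos(R\Phi_\pm)$ with $\Phi_\pm=|a+p|\pm|b-p|$, I need each oscillatory integral to be $O(R^{-1}\log R)$. I would pass to polar coordinates centred at $-a$, with $r=|p+a|$, and integrate by parts once in $r$. One has $\partial_r\Phi_\pm=1\pm\cos\alpha$, where $\alpha$ is the angle at $p$ between the directions to $-a$ and to $b$. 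One integration by parts gains $R^{-1}$. The boundary terms come from the cutoff at $r\sim1/R$, from the jump of $\rho$ across $|p|=2$, and from the singularity of $\rho$ at $p=0$. The last is handled by isolating a small ball around $0$, on which the weight $|p|^{-1}$ is integrable; there $|Q_R(a+p)Q_R(b-p)|\lesssim R^2$ directly, and the ball's radius is chosen $\lesssim R^{-1}$ so that its volume compensates. Derivatives falling on the amplitude produce integrands like $r^{-3}$, which after integrating down to $r\sim 1/R$ cost exactly a $\log R$.

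The main obstacle is the degeneracy of $\partial_r\Phi_\pm$. It vanishes on the line through $-a$ and $b$: for $\Phi_-$ on the two rays outside the segment $[-a,b]$, and for $\Phi_+$ on the segment itself. There the phase is stationary along a whole curve, so radial integration by parts breaks down. I would first try an angular cutoff of aperture $\beta\sim R^{-1/2}$ around this line. Inside the cone I bound trivially: the cross-section has area $\sim\beta^2 r^2$, so the contribution is $R^2\beta^2\int r^{-2}|b-p|^{-2}r^2\,dr$, which is $O(R)$ up to a logarithm. Outside the cone, $|1\pm\cos\alpha|\gtrsim\alpha^2$, and the integration by parts gives $R\int_{\alpha>\beta}\alpha^{-2}\,\alpha\,d\alpha\lesssim R\log(1/\beta)\sim R\log R$. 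Balancing these two regions is what should produce the $R\log(2+R)$ term in \eqref{eq:globalKR}. This is also where the assumption $a\neq\pm b$ keeps the geometry nondegenerate, while all constants stay uniform.
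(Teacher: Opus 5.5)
Your reduction asks for something false: the remainder $E_R(a,b)=\int(\rho(p)-\rho(1))Q_R(a+p)Q_R(b-p)\,dp$ is not $O(R\log R)$ uniformly in $a\neq\pm b$.

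At $b=-a$ it equals $\int(\rho(s-a)-\rho(1))Q_R(s)^2\,ds$. The part $|s|<1$ vanishes exactly, because $Q_R^2$ is radial and the mean of $|s-a|^{-1}$ over $|s|=r<1$ is $1$ (Newton's theorem). For $|s|>1$ the spherically averaged weight is $-\tfrac{3(r-1)}{16\pi r}$ on $1<r<3$ and $-\tfrac{1}{8\pi}$ for $r>3$. There $Q_R(s)^2=16\pi^2R^2\cos^2(R|s|)|s|^{-4}+O(R|s|^{-5})$. This gives $E_R(a,-a)=-\tfrac{8\pi^2}{3}R^2+O(R)$, and by continuity the same order persists for $a\neq\pm b$ close enough to antipodal.

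So the remainder must be allowed to be as large as $R^3(1+R|a+b|)^{-2}$. This is why the paper (which remarks that the remainder is $O(R^2)$ because of antipodal pairs) splits into two cases:
for $|a+b|\le R^{-1}$ it uses Cauchy--Schwarz, each factor $\int\rho(s-a)|Q_R(s)|^2ds$ being $O(R^3)$;
for $|a+b|>R^{-1}$ it proves $|E_R|\lesssim R^3(1+R|a+b|)^{-2}+R\log(2+R)$.
Your own non-oscillatory pieces already show this. With weight $|p+a|$, the factor $|a+p|^{-3}\wedge R^3$ and $R|b-p|^{-2}$, the absolute bound near $-a$ is of order $R/|a+b|$, not $R\log R$, when $R^{-1}<|a+b|\ll 1$.

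The oscillatory part has the same defect: it never tracks $|a+b|$, and the cone does not control where $\partial_r\Phi_\pm$ degenerates. Your aperture $\beta=R^{-1/2}$ is measured at $-a$, whereas $\alpha$ is the angle at $p$. Take $p$ near one of the rays, at distance $r$ from $-a$ and angle $\theta$ (seen from $-a$) off that ray. Then $\alpha\approx\theta|a+b|/(r\pm|a+b|)$, so for $r\gg|a+b|$ the quantity $1-\cos\alpha$ can be far smaller than $\beta^2$ even outside your cone. Equivalently, at distance $|u|\gg|a+b|$ from $-a$ the stationary region of $\Phi_-$ has transverse width of order $|u|/\sqrt{R|a+b|}$. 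That is much wider than your cone when $a,b$ are nearly antipodal, so the hypothesis $a\neq\pm b$ does not give uniform constants.

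The missing ingredient is an estimate with explicit dependence on $h=a+b$. The paper gets it by stationary phase in the two variables transverse to the critical line $\{s=u\,h/|h|\}$ (with $s=a+p$). Here $u$ is a parameter and the transverse Hessian has size $|h|/(|u|\,||h|-u|)$. This is combined with the vanishing of $g_a=\rho(\cdot-a)-\rho(1)$ at the endpoints: on the segment, $|g_a(ue)|\lesssim u(|h|-u)/\max\{|ue-a|,R^{-1}\}$. The result is $R\log(2+R)$ from $\Phi_+$, and $R/|h|$ from $\Phi_-$ plus $R^3(1+R|h|)^{-2}$ from the excised neighbourhoods. Finally $R/|h|\lesssim R^3(1+R|h|)^{-2}$ because $|h|\le 2$ and $R|h|>1$.
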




\begin{proof}
    If $|a+b|\leq \frac1R$, then Cauchy-Schwarz inequality gives
    \begin{equation*}
    |K_R(a,b)|^2
    \leq
    \left(\int \rho(s-a)|Q_R(s)|^2ds\right)
    \left(\int \rho(s-a)|Q_R(a+b-s)|^2ds\right),
    \end{equation*}
    and I contend that both factors are uniformly $O(R^3)$. 
    Consider the first one, the other factor is identical after the translation $s\mapsto a+b-s$, because $|a+b|\leq \frac1R$. Split the integral into $|s|\le R^{-1}$, $R^{-1}<|s|\le 1/2$ and $|s|>1/2$, using
    \begin{equation*}
    |Q_R(s)|\lesssim R^3\one_{\{|s|\le R^{-1}\}}+\frac{R}{|s|^2}\one_{\{|s|>R^{-1}\}},
    \end{equation*}
    and then observe that $|s-a|^{-1}$ is bounded on the first two regions and locally integrable on the last one.

    In the remainder of the proof $|a+b|>\frac1R$ and estimates are understood to be uniform in $a,b$ and $R\geq 2$.
    Subtracting the candidate main contribution in the integral defining $K_R$, let $E_R(a,b)=K_R(a,b)-\pi^2Q_R(a+b)$ with
    \begin{equation*}
        E_R(a,b)=\int_{\R^3}(\rho(s-a)-\rho(1))Q_R(s)Q_R(a+b-s)ds
    \end{equation*}
    just as in \cref{sec:2Dcontractions}, because $Q_R*Q_R=(2\pi)^3Q_R$, the constant coming from $\rho(1)=\frac1{8\pi}$. By \eqref{eq:usefulineqs}, it is sufficient to prove
    \begin{equation}\label{eq:target}
        |E_R(a,b)|\lesssim \frac{R^3}{(1+R|a+b|)^{2}}+R\log(R).
    \end{equation}
    Letting $h=a+b$, a patient but elementary application of trigonometric formulas to the explicit expression of $Q_R$ gives
    \begin{align}\notag
    \frac1{8\pi^2}Q_R(s)Q_R(h-s)
        &=\frac{\cos(R(|s|-|h-s|))-\cos(R(|s|+|h-s|))}{|s|^3|h-s|^3}\\ \notag
        &\quad -R\frac{\sin(R(|s|+|h-s|))+\sin(R(|s|-|h-s|))}{|s|^3|h-s|^2}\\ \notag
        &\quad -R \frac{\sin(R(|s|+|h-s|))-\sin(R(|s|-|h-s|))}{|s|^2|h-s|^3}\\
        \label{eq:relevantQQterm}
        &\quad +R^2 \frac{\cos(R(|s|+|h-s|))+\cos(R(|s|-|h-s|))}{|s|^2|h-s|^2},
    \end{align}
    therefore decomposing $E_R$ according to these summands produces a linear combination of the oscillatory integrals with phases $\Phi_\pm(s):=|s|\mp|h-s|$,
    \begin{equation*}
        Y_{\pm}^{ij}(R)=\int \frac{g_a(s)}{|s|^i |h-s|^j} e^{iR \Phi_\pm(s)} ds,\quad i,j=2,3,\quad g_a(s):=\rho(s-a)-\rho(1),
    \end{equation*}
    each multiplied by a factor proportional to $R^{6-i-j}$.
    The critical sets (portrayed in \cref{fig:placeholder}) are
    \begin{gather*}
        \set{\nabla\Phi_+(s)=0}=\set{s=uh/|h|,\, 0<u<|h|},\\
        \set{\nabla\Phi_-(s)=0}=\set{s=uh/|h|,\, u<0\text{ or }u>|h|}.
    \end{gather*}
    They are both 1-dimensional and the Hessian is non-singular in the normal coordinates: if $s=ue+v$, $e=\frac{h}{|h|}$, $v\in e^\perp$, at $v=0$,
    \begin{equation*}
        \partial^2_{vv}\Phi_+(ue)
        =
        \frac{|h|}{u(|h|-u)}I_{e^\perp},\qquad
        \partial^2_{vv}\Phi_-(ue)
        =
        \begin{cases}
        \frac{|h|}{|u|(|h|-u)}I_{e^\perp}, & u<0,\\
        -\frac{|h|}{u(u-|h|)}I_{e^\perp}, & u>|h|.
        \end{cases}
    \end{equation*}

\begin{figure}
    \centering
    \begin{tikzpicture}[
  x=0.5cm,
  y=0.5cm,
  line cap=round,
  line join=round,
]

\def\rSmall{4.472135955} 
\def\rBig{8.944271910}   

\clip (-9.5,-7) rectangle (17,12);

\draw[black,line width=0.55pt,dotted] (4,2) circle[radius=\rBig];
\draw[black,line width=0.55pt] (0,0) circle[radius=\rSmall];

\draw[black,line width=0.55pt,dashed] (-10,0) -- (0,0);
\draw[black,line width=0.95pt] (0,0) -- (8,0);
\draw[black,line width=0.55pt,dashed] (8,0) -- (18,0);

\fill[black] (0,0) circle[radius=1.25pt];
\node[black,anchor=south west,xshift=1pt,yshift=1pt] at (-1,0) {$0$};

\fill[black] (4,2) circle[radius=1.35pt];
\node[black,anchor=west,xshift=2pt] at (4,2) {$a$};

\fill[black] (4,-2) circle[radius=1.35pt];
\node[black,anchor=west,xshift=2pt] at (4,-2) {$b$};

\fill[black] (8,0) circle[radius=1.35pt];
\node[black,anchor=south east,xshift=-2pt,yshift=1pt] at (9,0) {$a+b$};

\node[black] at (-2.45,4.35) {$S^2$};
\node[black,anchor=west] at (-3.007950,7.211998) {$|s-a|=2$};
\node[black,anchor=west] at (0.7,-0.7) {$\nabla\Phi_+=0$};
\node[black,anchor=west] at (-9,-0.7) {$\nabla\Phi_-=0$};
\node[black,anchor=west] at (13,-0.7) {$\nabla\Phi_-=0$};
\end{tikzpicture}
    \caption{Critical sets of the oscillatory integrals. The solid segment is the critical set of $\Phi_+$, dashed rays are the one of $\Phi_-$.}
    \label{fig:placeholder}
\end{figure}

    Before applying the stationary phase principle, some considerations are in order:
    \begin{itemize}
        \item the amplitude factor $g_a(s)$ vanishes at the endpoints of critical regions, therefore the amplitude is overall locally integrable ($|x|^{-2}$ is locally integrable in $d=3$);
        \item when $a,b$ are collinear the amplitude has a singularity on the critical set: there is no need to separately consider these cases because by themselves they are negligible in computing $I(R)$ (hence the additional hypothesis in the statement), however configurations of $a,b$ close to those cases change the overall asymptotic as revealed by the forthcoming computations (and the statement is in fact valid uniformly in \emph{all} $a,b\in S^2$);
        \item amplitudes are not smooth at the extrema of critical sets, but they can be safely excluded since by 
        \eqref{eq:usefulineqs},
        \begin{multline*}
        \int_{|s|\le 2R^{-1}} |g_a(s)|\, |Q_R(s)|\, |Q_R(h-s)|ds\\
        +\int_{|h-s|\le 2R^{-1}} |g_a(s)|\, |Q_R(s)|\, |Q_R(h-s)|ds
        \lesssim \frac{R^3}{(1+R|h|)^{2}};
        \end{multline*}
        \item the same can be said for the region $|s-a|\le R^{-1}$ where the amplitude has a singularity,
        \begin{equation*}
        \int_{|s-a|\le R^{-1}} |g_a(s)|\, |Q_R(s)|\, |Q_R(h-s)|ds
        \lesssim
        R^2\int_0^{1/R} rdr
        \lesssim 1,
        \end{equation*}
        because in this region $|s|,\, |h-s|\gtrsim 1$;
        \item finally, $\rho$ is discontinuous at $|s-a|=2$ but this is irrelevant: a
    $\frac1R$-neighbourhood of the surface $|s-a|=2$ contributes $O(R)$,
    since there $|s|$ and $|h-s|$ are bounded away from zero and
    $|Q_R(s)Q_R(h-s)|\lesssim R^2$.
    \end{itemize}
    Recapitulating, $\frac1R$-neighborhoods of $s=0,a,h$ and $|s-a|=2$ may be excluded from integration in what follows by choosing an appropriate smooth partition of unity, which I shall not formally introduce for notational convenience: derivatives of cutoffs would not appear in the forthcoming estimates.
    The asymptotic contribution of the remaining regions of integration is now entirely determined by the behavior of $\Phi_\pm$ on their critical set.

    I report the asymptotic estimation only for terms with prefactor $R^2$ and amplitude denominator $|s|^2|h-s|^2$. This is because in the remaining integration domain $R|s|\gtrsim 1$ and $R|h-s|\gtrsim 1$, so each of the other factors in the expansion of \eqref{eq:relevantQQterm} is $\lesssim\frac{R^2}{|s|^2|h-s|^2}$ and therefore they produce smaller asymptotic terms. In any case, their asymptotic estimate can be carried out with identical passages.
    The proof is therefore reduced to the asymptotic expansion of
    \begin{equation*}
        Y_{\pm}(R):= Y^{22}_{\pm}(R)=\int e^{iR\Phi_\pm(s)}\frac{g_a(s)}{|s|^2|h-s|^2}ds.
    \end{equation*}
    
    Consider first $\Phi_+$. In the coordinates introduced above, the critical set is parametrized by $0<u<|h|$,
    applying the stationary phase principle \cite[Theorem 7.7.6]{hormander} in the normal variables $v$, with $u$ as parameter, and using the Hessian computed above, gives
    \begin{align}
    \label{eq:criticalplus}
    R^2\left|Y_+(R)\right|
    \lesssim
    R\int_{1/R}^{|h|-1/R}
    \frac{|g_a(ue)|}{u^2(|h|-u)^2}
    \frac{u(|h|-u)}{|h|}du
    + \frac{CR^3}{(1+R|h|)^2},
    \end{align}
    where $u(|h|-u)|h|^{-1}$ is the curvature factor $|\det \partial^2_{vv}\Phi_+(ue)|^{-1/2}$ and the second summand bounds the regions already removed, which rigorously speaking I am still including in the integral $Y_+$, simply for the sake of cleaner notation. 
    For $0<u<|h|$, $|ue-a|^2=1+u(u-|h|)$ and therefore, outside the removed ball $|ue-a|\leq \frac1R$,
    \begin{equation*} 
        |g_a(ue)|=\frac1{8\pi}\left|\frac1{|ue-a|}-1\right|
        \lesssim \frac{u(|h|-u)}{\max\{|ue-a|,R^{-1}\}}.
     \end{equation*}
    Substituting this bound in \eqref{eq:criticalplus} yields
    \begin{align*}
        R^2\left|Y_+(R)\right|
        &\lesssim\frac{R}{|h|}\int_0^{|h|}\frac{du}{\max\{|ue-a|,R^{-1}\}}+\frac{R^3}{(1+R|h|)^2}\\
        &\lesssim R\int_{-2}^{2}\frac{dt}{\max\{|t|,R^{-1}\}}+\frac{R^3}{(1+R|h|)^2}\lesssim R\log(2+R)+\frac{R^3}{(1+R|h|)^2}.
    \end{align*}

    The phase $\Phi_-$ is treated in a similar way. The critical set consists of the two half lines $s=ue$, $u<0$ and $u>|h|$.  Along these half lines, the stationary phase principle (see again \cite[Theorem 7.7.6]{hormander}) in the two normal variables gives
    \begin{align}
    \label{eq:criticalminus}
    R^2\left|Y_-(R)\right|
    \lesssim
    R\int_{(-\infty,0)\cup(|h|,\infty)}
    \frac{|g_a(ue)|}{|u|^2|u-|h||^2}
    \frac{|u|\,|u-|h||}{|h|}du
    + \frac{CR^3}{(1+R|h|)^2}.
    \end{align}
    The referenced result is in fact formulated on a fixed compact integration domain: strictly speaking, since $\set{\nabla\Phi_-(s)=0}$ is not compact, the stationary phase principle must first be applied to compact segments of $u<0$ and $u>|h|$, excluding also the intersection with $|s-a|=2$. The constants are uniform on these pieces once the singular neighborhoods already estimated above have been removed, and summing over the pieces gives precisely the one-dimensional integral \eqref{eq:criticalminus}.
    I already observed that $|ue-a|^2-1=u(u-|h|)$, using now $|g_a(ue)|\lesssim \min(|u|\,|u-|h||,1)$ and changing variables ($u=-t$ and $u=|h|+t$ on the two half lines),
    \begin{equation*}
        \int_{(-\infty,0)\cup(|h|,\infty)} \frac{|g_a(ue)|}{|u|\,|u-|h||}du
        \lesssim \int_0^\infty \frac{\min\{t(t+|h|),1\}}{t(t+|h|)}dt \lesssim 1.
    \end{equation*}
    Hence \eqref{eq:criticalminus} gives
    \begin{equation*} 
    R^2\left|Y_-(R)\right|
    \lesssim
    \frac{R}{|h|}+\frac{R^3}{(1+R|h|)^2},
     \end{equation*}
    which is bounded by the right-hand side of \eqref{eq:target}.
\end{proof}

I now combine the two estimates. Observe first that $K_R(a,b)$ is invariant under rotations, so it only depends on $a,b$ through $a\cdot b$ or equivalently $|a+b|$, so I write $K_R(a,b)=K_R(|a+b|)$. Then, changing variables,
\begin{multline*}
    \int_{S^2}\int_{S^2}
 \left|K_R(a,b)-\pi^2Q_R(a+b)\right|^2
 d\sigma(a)d\sigma(b)\\
 =\frac12 \int_0^2\left|K_R(r)-\pi^2 Q_R(r)\right|^2 r dr
 =\frac{R^4}{2} \int_0^{2 R}\left|\tfrac{1}{R^3} K_R\left(\tfrac{s}{R}\right)-\pi^2 F(s)\right|^2 s d s
\end{multline*}
is now a 1-dimensional integral.
Let $G_R(s):=\frac{1}{R^3}K_R\left(\frac{s}{R}\right)-\pi^2F(s)$ be the integrand in the last expression, which I claim is $o(R^4)$.
By \cref{lem:local}, for every $M>0$
\begin{equation}\label{eq:local-part-GR}
    \int_0^M |G_R(s)|^2sds
    \le \frac{M^2}{2}\sup_{0\le s\le M}|G_R(s)|^2=o(1).
\end{equation}
For the tail, \cref{lem:global} gives, uniformly in $0\le s\le 2R$,
\begin{equation*}
    \left|\frac1{R^3}K_R\left(\frac{s}{R}\right)\right|
    \lesssim \frac1{(1+s)^2}+\frac{\log(2+R)}{R^2}.
\end{equation*}
Since $|F(s)|\lesssim \frac1{(1+s)^2}$, it follows $|G_R(s)|\lesssim \frac1{(1+s)^2}+\frac{\log(2+R)}{R^2}.$
As a consequence,
\begin{align*}
    \int_M^{2R}|G_R(s)|^2sds
    &\lesssim
    \int_M^{\infty}\frac{s}{(1+s)^4}ds
    +\frac{\log(2+R)}{R^2}\int_M^{2R}\frac{s}{(1+s)^2}ds
    +\frac{\log(2+R)^2}{R^4}\int_0^{2R}sds \notag\\
    &\lesssim \int_M^{\infty}\frac{s}{(1+s)^4}ds
    +\frac{\log(2+R)^2}{R^2}.
\end{align*}
Taking first $\limsup_{R\to\infty}$ and then letting $M\to\infty$ gives the desired approximation in $L^2$. I now use it to derive the asymptotic of $I(R)$.
Expand:
\begin{multline}\label{eq:IR-expansion}
    I(R)=\norm{K_R}_{L^2(S^2\times S^2)}^2
    =\pi^4 \int_{S^2}\int_{S^2}Q_R(a+b)^2d\sigma(a)d\sigma(b)\\
    +2\pi^2 \int_{S^2}\int_{S^2}Q_R(a+b)E_R(a,b)d\sigma(a)d\sigma(b)
    +\norm{E_R}_{L^2(S^2\times S^2)}^2.
\end{multline}
The last term is \(o(R^4)\) by the \(L^2\)-approximation just proved,
which uses \cref{lem:local} on compact \(s\)-intervals and
\cref{lem:global} to control the tail. Moreover, by Cauchy-Schwarz inequality,
\begin{multline*}
    \left|\int_{S^2}\int_{S^2}Q_R(a+b)E_R(a,b)d\sigma(a)d\sigma(b)\right|\\
    \leq
    \left(\int_{S^2}\int_{S^2}Q_R(a+b)^2d\sigma(a)d\sigma(b)\right)^{1/2}
    \|E_R\|_{L^2(S^2\times S^2)}.
\end{multline*}
The second factor is $o(R^2)$ so if the first one is $O(R^2)$ the cross term is $o(R^4)$, but
\begin{equation*}
    \int_{S^2}\int_{S^2}Q_R(a+b)^2d\sigma(a)d\sigma(b)
    =\frac12\int_0^2 Q_R(r)^2rdr
    =\frac{R^4}{2}\int_0^{2R}sF(s)^2ds\sim 2\pi^2R^4,
\end{equation*}
where the last step follows from the known identity \cite[6.574(2)]{gradrhy} entailing
\begin{equation*}
    \int_0^\infty sF(s)^2ds
    =16\pi^2\int_0^\infty \frac{j_1(s)^2}{s}ds
    =4\pi^2.
\end{equation*}

\section{Bessel Moments}\label{sec:clts}
This Section entirely consists of the proof of \cref{lem:asymptotic}.
Recall that the Bessel function of the first kind $J_\nu(z)$ is the real analytic function
\begin{equation}\label{eq:besselDEF}
	J_\nu(z)=\left(\frac{z}{2}\right)^\nu \sum_{k=0}^{\infty} \frac{\left(-z^2/4\right)^k}{k ! \Gamma(\nu+k+1)},\quad \nu\in[0,\infty)
\end{equation}
(see \cite[10.2.2]{olver}). As a consequence,
\begin{equation*}
j_d(r)=1-\frac{r^2}{2 d}+O\left(r^4\right),\quad
q \log j_d(r / \sqrt{q})=-\frac{r^2}{2 d}+O\left(r^4/q\right),
\end{equation*}
uniformly for $0\leq r\leq M$ for fixed $M>0$.
Take $\epsilon>0$ smaller than the first zero of $j_d$, by Laplace's principle, as $q\to\infty$,
\begin{multline*}
    \int_0^{\epsilon} j_d(r)^q r^{d-1} d r=q^{-d / 2} \int_0^{\epsilon{\sqrt{q}}} j_d(s / \sqrt{q})^q s^{d-1} d s\\ 
    \sim 
    q^{-d/2}\int_0^{\infty} e^{-s^2 /(2 d)} s^{d-1} d s
    =q^{-d/2}2^{d / 2-1} d^{d / 2} \Gamma(d / 2).
\end{multline*}
I contend that this is the relevant first-order contribution, that is the complement is negligible. 
Indeed, $|j_d|\leq1$ uniformly (it is the Fourier transform of a probability measure) and the usual decay of Bessel functions gives $|j_d(r)| \leq C_d(1+r)^{-(d-1) / 2}$, therefore
\begin{equation*}
\int_{\epsilon}^{\infty}\left|j_d(r)\right|^{q} r^{d-1} d r=o\left(q^{-d / 2}\right),\quad q\to \infty,
\end{equation*}
by splitting integration over $[\epsilon,M]$ and $(M,\infty)$ for fixed and large enough $M$.


\end{document}